\DeclareFontFamily{U}{mathx}{\hyphenchar\font45}
\DeclareFontShape{U}{mathx}{m}{n}{
      <5> <6> <7> <8> <9> <10>
      <10.95> <12> <14.4> <17.28> <20.74> <24.88>
      mathx10
      }{}
\DeclareSymbolFont{mathx}{U}{mathx}{m}{n}
\DeclareMathAccent{\widecheck}{0}{mathx}{"71}
\DeclareMathAccent{\wideparen}{0}{mathx}{"75}
\newcommand{\numberthis}{\addtocounter{equation}{1}\tag{\theequation}}
\newcommand{\N}{\ensuremath{\mathbb{N}}}
\newcommand{\C}{\ensuremath{\mathbb{C}}}
\newcommand{\T}{\ensuremath{\mathbb{T}}}
\newcommand{\restrict}[1]{\raisebox{-0.15ex}{$|$}_{#1}}
\newcommand{\integral}[3]{\int_{#2} #1\,\mathrm{d}#3}
\DeclareMathOperator{\aut}{Aut} 
\DeclareMathOperator{\Iso}{Iso} 
\newcommand{\act}[1]{\overset{#1}{\curvearrowright}}
\newcommand{\set}[2]{\left\{\,#1\;\middle|\; #2\,\right\}}
\def\moverlay{\mathpalette\mov@rlay}
\def\mov@rlay#1#2{\leavevmode\vtop{%
   \baselineskip\z@skip \lineskiplimit-\maxdimen
   \ialign{\hfil$\m@th#1##$\hfil\cr#2\crcr}}}
\newcommand{\charfusion}[3][\mathord]{
    #1{\ifx#1\mathop\vphantom{#2}\fi
        \mathpalette\mov@rlay{#2\cr#3}
      }
    \ifx#1\mathop\expandafter\displaylimits\fi}
\newcommand{\seq}[2]{\left(#1_{#2}\right)_{#2\geq1}}
\newcommand{\net}[3]{\left(#1_{#2}\right)_{#2\in#3}}
\newcommand{\norm}[2]{\ensuremath{\left|\!\left|#1\right|\!\right|_{#2}}}
\newcommand{\abs}[1]{\ensuremath{\left|#1\right|}}
\newcommand{\addQEDstyle}[2]{\AtBeginEnvironment{#1}{\pushQED{\qed}\renewcommand{\qedsymbol}{#2}}\AtEndEnvironment{#1}{\popQED}}
\theoremstyle{plain}
\newtheorem{thm}{Theorem}
\numberwithin{thm}{section}
\newtheorem{lem}[thm]{Lemma}
\newtheorem{cor}[thm]{Corollary}
\newtheorem{prop}[thm]{Proposition}
\theoremstyle{definition}
\newtheoremstyle{question}
{\topsep}
{\topsep}
{}
{}
{\bfseries}
{.}
{.5em}
{}
\theoremstyle{question}
\newtheorem*{question}{Question}
\theoremstyle{remark}
\newtheorem{rk}[thm]{Remark}
\theoremstyle{plain}
\newtheorem{introthm}{Theorem}
\theoremstyle{definition}
\newtheorem{introdefn}[introthm]{Definition}
\title{Weak property $(\mathrm{T}_{L^p})$ for discrete groups}
\author{Emilie Mai Elki\ae{}r}
\date{\today}
\begin{document}

\maketitle

\begin{abstract}
We show that, for a countable discrete group $\Gamma$, property $(\mathrm{T}_{L^p})$ of Bader, Furman, Gelander and Monod is equivalent to the property that, whenever an $L^p$-representation of $\Gamma$ admits a net of almost invariant unit vectors, it has a non-zero invariant vector. Central in the proof is to show that the closure of the group of $\T$-valued $1$-coboundaries is a sufficient criteria for strong ergodicity of ergodic p.m.p. actions.
\end{abstract}

\section{Introduction}
Kazhdan's property $(\rm T)$ is a rigidity property concerning how a group may act on a Hilbert space. It was first introduced by Kazhdan in 1967 in \cite{Kazhdan1967} and has since then become an important notion in analytic group theory. We refer to \cite{BekkaDeLaHarpeValette} for a thorough introduction to the topic. In their seminal paper \cite{BaderFurmanGelanderMonod} from 2007, Bader, Furman, Gelander and Monod brought the notion of property $(\rm T)$ to the broader framework of Banach spaces. Their property $(\mathrm{T}_{\mathcal{E}})$, where $\mathcal{E}$ is a class of Banach spaces, is a rigidity property concerning how a group may act on spaces in the class $\mathcal{E}$. Since then, many authors have studied rigidity for actions on Banach spaces -- see, e.g., \cite{BekkaOlivier2014Tlp}, \cite{delaSalle2015TowardsStrongBanachT}, \cite{deLaatdelaSalle2021SpectralGap}, \cite{LavyOlivier2014FixedpointSF}, \cite{MarrakchiDeLASalle2023DependenceOnp}, \cite{Oppenheim2023BanachTforSLnZ} and \cite{Oppenheim2023Zuk}, to name a few.

Let $\Gamma$ be a discrete group. For a Banach space $E$, we denote by $\Iso(E)$ the group of linear surjective isometries on $E$. An isometric representation $(\pi,E)$ of $\Gamma$ on $E$ is a group homomorphism $\pi:\Gamma\rightarrow\Iso(E)$. When $E$ is a Hilbert space, an isometric representation is commonly known as a unitary representation. Given an isometric representation $(\pi,E)$, a vector $\xi\in E$ is said to be \emph{invariant} if $\pi(t)\xi=\xi$, for all $t\in\Gamma$. It is easy to verify that the set of invariant vectors forms a subspace of $E$, which we denote $E^{\pi}$. A net of vectors $\net{\xi}{i}{I}$ is said to be \emph{almost invariant} if $\norm{\pi(t)\xi_i-\xi_i}{E}\rightarrow 0$, for every $t\in\Gamma$.
\begin{introdefn}[Property $(\mathrm{T}_{\mathcal{E}})$]\label{def:T_E}
Let $\mathcal{E}$ be a class of Banach spaces. A discrete group $\Gamma$ has \emph{property $(\mathrm{T}_{\mathcal{E}})$} if, whenever an isometric representation $(\pi,E)$ of $\Gamma$ with $E$ in the class $\mathcal{E}$ admits a net of almost invariant unit vectors $\net{\xi}{i}{I}$, there exists a net of invariant vectors $\net{\eta}{i}{I}$ such that $\norm{\xi_i-\eta_i}{E}\rightarrow0$.
\end{introdefn}
In \cite{BaderFurmanGelanderMonod}, property $(\mathrm{T}_{\mathcal{E}})$ is defined in terms of the lack of nets of almost invariant unit vectors in the quotient $E/E^{\pi}$. By \cite[Lemma 18]{Tanaka2017PropertyT}, Definition \ref{def:T_E} above is equivalent to the definition of Bader, Furman, Gelander and Monod. When $\mathcal{E}$ is the class of complex Hilbert spaces, we recover Kazhdan's property $(\rm T)$.

In the classical setting of unitary representations on Hilbert spaces, it is well-known that Kazhdan's property $(\rm T)$ allows several equivalent formulations. In particular, property $(\rm T)$ is often defined as the property that the existence of almost invariant unit vectors forces the existence of a non-zero invariant vector. When generalizing this property to the setting of actions on Banach spaces, we obtain an a priori weaker version of property $(\mathrm {T}_{\mathcal{E}})$.
\begin{introdefn}[Weak property $(\mathrm{T}_{\mathcal{E}})$]\label{def:weakT_E}
Let $\mathcal{E}$ be a class of Banach spaces. A discrete group $\Gamma$ has \emph{weak property $(\mathrm {T}_{\mathcal{E}})$} if any isometric representation $(\pi,E)$ of $\Gamma$ with $E$ in the class $\mathcal{E}$ admitting a net of almost invariant unit vectors has a non-zero invariant vector.
\end{introdefn}
While it is easy to see that weak property $(\mathrm{T}_{\mathcal{E}})$ is implied by property $(\mathrm{T}_{\mathcal{E}})$, the converse implication depends on the class $\mathcal{E}$. It is well-known to experts that the following two conditions are sufficient to ensure the equivalence of weak property $(\mathrm{T}_{\mathcal{E}})$ and property $(\mathrm{T}_{\mathcal{E}})$:
\begin{enumerate}[(i)]
    \item $\mathcal{E}$ is stable under quotients,
    \item $\mathcal{E}$ is a class of superreflexive Banach spaces stable under taking complemented subspaces.
\end{enumerate}
A proof of this can be found in \cite[Proposition 2.20]{ElkiaerPooya2023}. Each of these conditions cover the case where $\mathcal{E}$ is the class of Hilbert spaces. But interestingly, the class $L^p$ of $L^p$-spaces on $\sigma$-finite measure spaces, for $1\leq p<\infty$, does not satisfy either of the two conditions unless $p=2$. In this paper, we address the question if weak property $(\mathrm{T}_{L^p})$ is the same as property $(\mathrm{T}_{L^p})$. We show that the answer is affirmative when the group in question is countable discrete.
\begin{introthm}[Theorem \ref{thm:weakTLp_and_TLp}]\label{introthm:weakTLp_and_TLp}
A countable discrete group has property $(\mathrm{T}_{L^p})$ if and only if it has weak property $(\mathrm{T}_{L^p})$.
\end{introthm}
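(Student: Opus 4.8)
The implication from property $(\mathrm{T}_{L^p})$ to weak property $(\mathrm{T}_{L^p})$ is immediate, so the task is the converse, which I would prove by contraposition: assuming property $(\mathrm{T}_{L^p})$ fails, I will manufacture an isometric $L^p$-representation that has almost invariant unit vectors but no non-zero invariant vector, contradicting weak property $(\mathrm{T}_{L^p})$. The starting point is the reflexive decomposition $E=E^{\pi}\oplus\overline{N}$, where $N=\vspan\set{\pi(g)\xi-\xi}{g\in\Gamma,\ \xi\in E}$; for $1<p<\infty$ the space $L^p$ is reflexive, so the isometric $\Gamma$-action on the weakly compact convex set $\clconv\{\pi(g)\xi:g\in\Gamma\}$ has a common fixed point by the Ryll--Nardzewski theorem, giving a bounded $\Gamma$-equivariant projection onto $E^{\pi}$ with kernel $\overline{N}$. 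Writing an almost invariant net as $\xi_i=\eta_i+\zeta_i$ with $\eta_i\in E^{\pi}$ and $\zeta_i\in\overline{N}$, property $(\mathrm{T}_{L^p})$ is equivalent to $\zeta_i\to 0$, i.e.\ to $\pi\restrict{\overline{N}}$ admitting no almost invariant unit vectors. The obstruction to concluding directly from weak property $(\mathrm{T}_{L^p})$ is that, for $p\neq 2$, the complemented subspace $\overline{N}$ need not itself belong to the class $L^p$; this is precisely why the two stability hypotheses recalled in the introduction do not apply.

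To circumvent this I would invoke the Banach--Lamperti description of isometries of $L^p$ ($p\neq2$): every isometric representation on $L^p(\Omega,\mu)$ is a twisted Koopman representation $\pi(g)f=c_g\cdot(f\circ\sigma_g^{-1})\cdot(\mathrm{d}\sigma_g^{*}\mu/\mathrm{d}\mu)^{1/p}$, determined by a measure-class-preserving action $\sigma$ of $\Gamma$ on $(\Omega,\mu)$ together with a $\T$-valued cocycle $c$. After passing to the ergodic decomposition of $\sigma$ and reducing to the probability-measure-preserving case — here $\sigma$-finiteness and countability of $\Gamma$ are used, and the presence of almost invariant unit vectors is what forces the relevant components to carry an equivalent invariant probability measure — one analyses, on a single ergodic p.m.p.\ component, a twisted Koopman representation on $L^p(X,\mu)$ with $\mu(X)=1$. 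There the invariant vectors are one-dimensional exactly when $c$ is a coboundary $c_g=\partial b(g)=b\cdot\overline{b\circ\sigma_g^{-1}}$ for some $\T$-valued $b$, and are zero otherwise. Taking moduli turns $\zeta_i$ into almost invariant non-negative unit vectors $\abs{\zeta_i}$ for the untwisted Koopman representation $\pi_0$, since $\abs{\pi_c(g)\zeta_i}=\pi_0(g)\abs{\zeta_i}$ and $\abs{\,\cdot\,}$ is $1$-Lipschitz. If $\abs{\zeta_i}$ stays bounded away from the constants, then $\pi_0$ has almost invariant unit vectors in $L^p_0(X)$ and $\sigma$ is \emph{not strongly ergodic}; otherwise $\abs{\zeta_i}\to1$, so $\zeta_i$ is asymptotically a $\T$-valued function $u_i$, the almost invariance reads $\norm{c_g-\partial u_i(g)}{p}\to0$, placing $c$ in the closure $\overline{B^1(\Gamma,\T)}$ of the $\T$-valued coboundaries, and either $c\notin B^1(\Gamma,\T)$ directly or, untwisting by the trivialising function, one is returned to a mean-zero asymptotically $\T$-valued almost invariant net and hence again to non-strong ergodicity.

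In every case the goal is an ergodic p.m.p.\ action of $\Gamma$ carrying a $\T$-valued cocycle $c\in\overline{B^1(\Gamma,\T)}\setminus B^1(\Gamma,\T)$. When $c\notin B^1(\Gamma,\T)$ this is already in hand; in the non-strongly-ergodic case it is supplied by the key lemma advertised in the abstract, namely that for an ergodic p.m.p.\ action closedness of the group of $\T$-valued $1$-coboundaries is sufficient for strong ergodicity, so that $\overline{B^1(\Gamma,\T)}\neq B^1(\Gamma,\T)$ and the desired $c$ exists. Given such a $c$, I would choose $\T$-valued functions $b_n$ with $\partial b_n\to c$ in measure — countability of $\Gamma$ lets a single sequence work simultaneously for all group elements — and form the twisted Koopman representation $\pi_c$ on the honest $L^p$-space $L^p(X,\mu)$. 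Each $b_n$ is a unit vector, and a direct computation using $\abs{b_n}=1$ gives
\[
\norm{\pi_c(g)b_n-b_n}{p}^p=\int_X\abs{c_g-\partial b_n(g)}^p\,\mathrm{d}\mu\longrightarrow 0,
\]
so the $b_n$ are almost invariant unit vectors, while $c\notin B^1(\Gamma,\T)$ guarantees $L^p(X,\mu)^{\pi_c}=\{0\}$. This contradicts weak property $(\mathrm{T}_{L^p})$ and completes the contrapositive.

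I expect the main obstacle to be the key lemma itself: proving that closedness of the $\T$-valued coboundary group forces strong ergodicity, equivalently that an asymptotically invariant sequence of sets can be converted into a $\T$-valued cocycle that is approximable by, but not equal to, a coboundary. This is a genuinely ergodic-theoretic statement in the spirit of Schmidt's analysis of asymptotically invariant sequences through first cohomology with $\T$-coefficients, and it is where the bulk of the work lies. A secondary, more technical difficulty is the reduction from an arbitrary isometric $L^p$-representation — with its measure-class-preserving action and Radon--Nikodym cocycle — to the clean ergodic probability-measure-preserving setting to which the key lemma applies; the cases $p=2$ (classical property $(\rm T)$) and $p=1$ (where reflexivity and the Banach--Lamperti form are unavailable) would be treated separately.
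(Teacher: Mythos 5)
Your endgame coincides with the paper's: the counterexample to weak property $(\mathrm{T}_{L^p})$ is the twisted Koopman representation $\pi_{p,\sigma,c}$ attached to an ergodic p.m.p.\ action and a cocycle $c$ lying in the closure of $B^1(\sigma;\T)$ but not in it (your two observations about invariant and almost invariant vectors for $\pi_{p,\sigma,c}$ are exactly Propositions \ref{prop:pi(p,sigma,c)_invariant_vectors} and \ref{prop:pi(p,sigma,c)_almost_invariant_vectors}), and the key lemma you isolate --- closedness of $B^1(\sigma;\T)$ forces strong ergodicity --- is Theorem \ref{thm:B1_closed_implies_strongly_ergodic}, proved in the paper by applying the open mapping theorem for Polish groups to $\varphi\mapsto b_\varphi$. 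Where you diverge is in how you arrive at an ergodic p.m.p.\ action to which that lemma can be applied, and there your argument has a genuine gap.

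The paper gets there cheaply: failure of $(\mathrm{T}_{L^p})$ implies failure of classical property $(\mathrm{T})$ by Theorem A of Bader--Furman--Gelander--Monod, and Connes--Weiss then supplies a single ergodic, non-strongly-ergodic p.m.p.\ action. You instead analyse an arbitrary $L^p$-representation witnessing the failure of $(\mathrm{T}_{L^p})$ via Banach--Lamperti and propose to ``pass to the ergodic decomposition and work on a single ergodic p.m.p.\ component.'' That step fails as stated. Almost invariant unit vectors need not localize to any single ergodic component: for $\Z$ acting on $\bigsqcup_n\Z/n\Z$, the representation on $\bigoplus_n L^p_0(\Z/n\Z)$ has almost invariant unit vectors while the restriction to each component has a spectral gap; converting such spread-out data into a single ergodic action is precisely the hard content of the Connes--Weiss theorem (via Gaussian actions), which your outline does not supply. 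Separately, the claim that almost invariant unit vectors force the relevant components to carry an equivalent invariant probability measure is false ($\Z$ acting on $\R$ by translation has almost invariant unit vectors in $L^p(\R)$ and no finite invariant measure in the Lebesgue class) --- although in that situation the representation already has no non-zero invariant vector and is itself a counterexample to weak $(\mathrm{T}_{L^p})$, so this part is repairable by a case split. In short, your second half is the paper's argument; your first half must either be replaced by the reduction through classical property $(\mathrm{T})$ and Connes--Weiss, as in the paper, or be supplemented by a genuine substitute for the Gaussian-action construction.
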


The proof of Theorem \ref{introthm:weakTLp_and_TLp} relies on an analysis of ergodicity of measure preserving actions on probability spaces (in short: p.m.p. actions). The connection to property $(\rm T)$ is given by the characterization by Connes and Weiss in \cite{ConnesWeiss1980}: A discrete group has property $(\rm T)$ if and only if every p.m.p. ergodic action (on a diffuse standard probability space) is strongly ergodic. We establish, via an application of the open mapping theorem for Polish groups, that the closure in a natural topology of the group of $\T$-valued $1$-coboundaries for a given ergodic p.m.p. action is a sufficient condition to ensure strong ergodicity. This may be of independent interest.

\begin{introthm}[Theorem \ref{thm:B1_closed_implies_strongly_ergodic}]\label{introthm:B1_closed_implies_strongly_ergodic}
Let $\Gamma$ be a countable discrete group, $(\Omega,\nu)$ a separable probability space and $\Gamma\act{\sigma}(\Omega,\nu)$ an ergodic p.m.p. action. If $B^1(\sigma;\T)$ is closed then $\sigma$ is strongly ergodic.
\end{introthm}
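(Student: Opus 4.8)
The plan is to exhibit $B^1(\sigma;\T)$ as the continuous homomorphic image of a Polish group and then let the open mapping theorem transport topological information about the image back to its source. Write $\mathcal{U}=L^0(\Omega,\T)$ for the group of (classes of) measurable maps $\Omega\to\T$, equipped with pointwise multiplication and the topology of convergence in measure. Since $(\Omega,\nu)$ is separable, $\mathcal{U}$ is a Polish group, and the action $\sigma$ induces a continuous action of $\Gamma$ on $\mathcal{U}$ via $(t\cdot\phi)(\omega)=\phi(\sigma_{t^{-1}}\omega)$. I would then consider the coboundary map $\delta\colon\mathcal{U}\to\mathcal{U}^{\Gamma}$, $\delta(\phi)(t)=\phi\cdot(t\cdot\phi)^{-1}$, where $\mathcal{U}^{\Gamma}$ carries the product topology. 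As $\Gamma$ is countable, $\mathcal{U}^{\Gamma}$ is Polish and $\delta$ is a continuous homomorphism whose image is exactly $B^1(\sigma;\T)$. Its kernel consists of the $\Gamma$-invariant elements of $\mathcal{U}$, which by ergodicity of $\sigma$ are precisely the constant maps, so $\ker\delta=\T$. Hence $\delta$ factors through a continuous bijective homomorphism $\bar\delta\colon\mathcal{U}/\T\to B^1(\sigma;\T)$.

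First I would record the two Polishness facts that make the open mapping theorem applicable: $\mathcal{U}/\T$ is Polish because $\T$ is a closed subgroup of the Polish group $\mathcal{U}$, and the hypothesis that $B^1(\sigma;\T)$ is closed exhibits it as a closed subgroup of the Polish group $\mathcal{U}^{\Gamma}$, hence itself Polish. The open mapping theorem for Polish groups then upgrades $\bar\delta$ from a continuous bijective homomorphism to a topological isomorphism; in particular $\bar\delta^{-1}$ is continuous. The remaining content of the theorem is that this continuity forces strong ergodicity, which I would establish in contrapositive form.

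So suppose $\sigma$ is not strongly ergodic and let $\seq{A}{n}$ be a non-trivial asymptotically invariant sequence of measurable sets, i.e.\ $\nu(\sigma_t A_n\triangle A_n)\to0$ for every $t\in\Gamma$ while $\inf_n \nu(A_n)\bigl(1-\nu(A_n)\bigr)>0$. Put $f_n=\mathbf{1}_{A_n^c}-\mathbf{1}_{A_n}\in\mathcal{U}$, a $\{\pm1\}$-valued map. Since $f_n$ is its own inverse, $\delta(f_n)(t)(\omega)=f_n(\omega)f_n(\sigma_{t^{-1}}\omega)$ equals $1$ off $\sigma_t A_n\triangle A_n$ and $-1$ on it, whence $\delta(f_n)(t)\to1$ in measure for each $t$, that is $\delta(f_n)\to\mathbf 1$ in $\mathcal{U}^{\Gamma}$; thus $(\delta(f_n))_{n\geq1}$ is a sequence in $B^1(\sigma;\T)$ converging to the identity. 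On the other hand, as both $A_n$ and $A_n^c$ carry mass bounded away from $0$ and no single $z\in\T$ can be within distance $1$ of both $+1$ and $-1$, the distance from $f_n$ to every constant is bounded below, so the class $[f_n]$ does not converge to the identity coset in $\mathcal{U}/\T$. This contradicts the continuity of $\bar\delta^{-1}$, since $[f_n]=\bar\delta^{-1}(\delta(f_n))$ would then converge to $\bar\delta^{-1}(\mathbf 1)=e$. Therefore $B^1(\sigma;\T)$ cannot be closed, which is the desired contrapositive.

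I expect the main obstacle to lie in the bookkeeping of the last paragraph: verifying that failure of strong ergodicity, phrased via asymptotically invariant sets, translates faithfully into convergence of $\delta(f_n)$ to the identity in $\mathcal{U}^{\Gamma}$ together with non-convergence of $[f_n]$ in the quotient metric $\bar d([\phi],[\psi])=\inf_{z\in\T} d(\phi,z\psi)$. The appeal to the open mapping theorem is clean once the hypotheses are secured, but identifying $\ker\delta$ with the constants (where ergodicity enters) and ensuring that closedness delivers exactly the Polish structure needed to apply it are the points that require care.
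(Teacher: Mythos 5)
Your proposal is correct and follows essentially the same route as the paper: both apply the Open Mapping Theorem for Polish groups to the coboundary homomorphism $\varphi\mapsto b_\varphi$ on $L^0(\Omega,\nu;\T)$, identify its kernel with the constants via ergodicity, and test openness at the identity against the $\pm1$-valued functions built from an asymptotically invariant sequence. The only differences are presentational — you pass to the quotient $L^0(\Omega,\nu;\T)/\T$ and argue in contrapositive form, while the paper keeps the non-injective map, extracts a quantitative openness statement at the identity, and concludes directly that every asymptotically invariant sequence is trivial.
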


A fair point can be made that the sufficient conditions (i) and (ii) above for equivalence of weak property $(\mathrm{T}_{\mathcal{E}})$ and property $(\mathrm{T}_{\mathcal{E}})$ are stronger than needed. For example, it is not necessary to require that every quotient stays in the class $\mathcal{E}$. Instead, given an isometric representation $(\pi,L^p(\Omega,\nu))$, we are interested only in the quotient $L^p(\Omega,\nu)/L^p(\Omega,\nu)^{\pi}$. This raises the question if a proof of Theorem \ref{introthm:weakTLp_and_TLp} is viable without appealing to Theorem \ref{introthm:B1_closed_implies_strongly_ergodic}. We give a partial negative answer to this question in Theorem \ref{introthm:Lp0_is_not_an_Lp-space}. When $\pi$ comes from an ergodic p.m.p. action, the quotient $L^p(\Omega,\nu)/L^p(\Omega,\nu)^{\pi}$ is equivariantly and isometrically isomorphic to the dual of the subspace $L^{p'}_0(\Omega,\nu)$ of functions with mean zero, where $p'$ is the Hölder conjugate of $p$. We show that the subspace $L^p_0(\Omega,\nu)$ in many cases is not isometrically isomorphic to an $L^p$-space on a $\sigma$-finite measure space, and so, neither is its dual.

\begin{introthm}[Theorem \ref{thm:Lp0_is_not_an_Lp-space}]\label{introthm:Lp0_is_not_an_Lp-space}
Let $(\Omega,\nu)$ be a diffuse standard probability space and let $1\leq p<\infty$, $p\not\in2\N$. Then $L^p_0(\Omega,\nu)$ is not isometrically isomorphic to an $L^p$-space on a $\sigma$-finite measure space.
\end{introthm}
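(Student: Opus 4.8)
The plan is to argue by contradiction, exploiting the rigidity of isometric embeddings between $L^p$-spaces that is available precisely when $p$ is \emph{not} an even integer. Suppose $L^p_0(\Omega,\nu)$ were isometrically isomorphic to $L^p(M,\mu)$ for some $\sigma$-finite measure space $(M,\mu)$, via a surjective isometry $U$. Composing with the inclusion $L^p_0(\Omega,\nu)\hookrightarrow L^p(\Omega,\nu)$ yields a non-surjective linear isometry $T=\iota U\colon L^p(M,\mu)\to L^p(\Omega,\nu)$ whose range is exactly the mean-zero hyperplane $L^p_0(\Omega,\nu)$. The whole strategy is to show that the range of such an embedding is forced to be a ``weighted sublattice'' of $L^p(\Omega,\nu)$, and that no weighted sublattice can coincide with $L^p_0$.

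The central input is the structure theorem for linear isometries \emph{into} an $L^p$-space valid exactly when $p\notin 2\N$: for such $p$ every linear isometry $T\colon L^p(M,\mu)\to L^p(\Omega,\nu)$ is disjointness-preserving and hence a weighted composition (Lamperti-type) operator. This is the step where the hypothesis $p\notin2\N$ is indispensable, and I expect it to be the main obstacle to keep honest: for even integers the identity $|z|^p=(z\bar z)^{p/2}$ produces exotic, non-disjointness-preserving isometric embeddings, so I must invoke the correct \emph{into}-version of the theorem (Lamperti/Hardin, or Rudin's equimeasurability) rather than the surjective Banach--Lamperti theorem, and take care of complex scalars and of a possibly infinite, merely $\sigma$-finite, domain measure. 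Granting this, the range of $T$ can be written as $h\cdot L^p(\mathcal A)$, where $h$ is a measurable weight on $\Omega$ and $\mathcal A\subseteq\mathcal B$ is the sub-$\sigma$-algebra generated by the composition map, with $L^p(\mathcal A)$ denoting the $\mathcal A$-measurable elements carrying the appropriate integrability.

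Next I would pin down $h$ and $\mathcal A$ from the requirement $h\cdot L^p(\mathcal A)=L^p_0(\Omega,\nu)$. Since $(\Omega,\nu)$ is diffuse, $L^p_0$ contains an everywhere-nonzero function, so the range has full support and hence $h\neq0$ almost everywhere. Testing against the defining functional, $h\cdot 1_A\in L^p_0$ for every $A\in\mathcal A$ of finite measure gives $\int_A h\,d\nu=0$, i.e.\ $h$ has vanishing conditional expectation onto $\mathcal A$. The decisive computation is then: for bounded $v$ supported where the relevant weight is finite, decomposing $g=hv$ into its mean and its mean-zero part gives $hv-\bigl(\int hv\,d\nu\bigr)1=hu$ with $u\in L^p(\mathcal A)$, and cancelling $h$ (legitimate as $h\neq0$ a.e.) yields $v=u+\bigl(\int hv\,d\nu\bigr)h^{-1}$. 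In particular $h^{-1}\in L^p(\Omega,\nu)$, and since such $v$ are dense, one obtains $L^p(\mathcal B)=L^p(\mathcal A)+\vspan\{h^{-1}\}$.

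Finally I would extract the contradiction from this codimension bound. The last identity says $L^p(\mathcal A)$ has codimension at most one in $L^p(\mathcal B)=L^p(\Omega,\nu)$. On a diffuse standard space, however, a sub-$\sigma$-algebra $\mathcal A$ either equals $\mathcal B$ or has $L^p(\mathcal A)$ of infinite codimension in $L^p(\mathcal B)$ — one manufactures infinitely many independent directions inside the conditional fibres using non-atomicity. The finite-codimension conclusion therefore forces $\mathcal A=\mathcal B$; but then $h\cdot L^p(\mathcal B)$ contains $h\cdot(g/h)=g$ for every $g\in L^p(\Omega,\nu)$ (the integrability of $g/h$ being immediate from the isometry relation), so the range is all of $L^p(\Omega,\nu)$, not the proper subspace $L^p_0$. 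This contradiction shows no such $U$ can exist, which is the claim; by duality it also settles the quotient $L^p(\Omega,\nu)/L^p(\Omega,\nu)^{\pi}$ discussed in the introduction. The two routine points — the exhaustion argument accommodating $\sigma$-finite $\mu$, and the explicit construction witnessing infinite codimension — I would isolate as short lemmas.
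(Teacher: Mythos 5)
Your strategy is sound and it is genuinely different from the paper's. The paper first reduces to a probability target (Lemma \ref{lem:exist_equivalent_finite_measure}), then ``rotates'' $L^p_0(\Omega,\nu)$ by a unimodular mean-zero multiplier so that its image contains $1_\Omega$ and generates the full Borel $\sigma$-algebra (Lemma \ref{lem:rotating_subspace_of_balanced_fcts}), and finally applies the Plotkin--Rudin--Hardin extension theorem (Theorem \ref{thm:extension}) to the would-be surjective isometry: the extension would be an injective isometry defined on the strictly larger space $L^p(\Omega,\nu)$ whose restriction to a proper subspace is already onto, which is absurd. You instead apply the Lamperti-type structure theorem for isometries of a \emph{full} $L^p$-space into another to the composition $\iota U$, and then rule out that the range of a weighted composition operator can be the mean-zero hyperplane via your codimension computation. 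Both arguments close; the paper's is a few lines once the extension theorem is granted, while yours needs the extra bookkeeping you flag (reduction of $\mu$ to a probability measure so that $h=T1_M\in L^p_0$, and the lemma that a proper sub-$\sigma$-algebra of a diffuse standard space has $L^p(\mathcal A)$ of codimension at least two). One correction of attribution: the hypothesis $p\notin2\N$ is \emph{not} what makes your key input work. The disjointness preservation of into-isometries between full $L^p$-spaces follows from the equality case of the Clarkson--Lamperti inequality and holds for every $p\neq 2$, even integers included (the exotic examples for $p\in 2\N$ concern isometries defined only on \emph{subspaces}, which is exactly why the paper's extension theorem requires $p\notin2\N$). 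Consequently your route, once the deferred lemmas are written out, actually proves the stronger statement for all $p\neq2$, whereas the paper's proof is genuinely confined to $p\notin2\N$. So state the structure theorem you invoke with the correct hypothesis $p\neq2$ and cite the into-version (Fleming--Jamison, Chapter 3) rather than Rudin's equimeasurability, which is a subspace statement and does need $p\notin2\N$.
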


This paper is organized as follows: In section \ref{sec:pre} we cover the preliminaries on Polish groups, actions on measure spaces, ergodicity and strong ergodicity, and on the topological groups of $\T$-valued $1$-cocycles and $1$-coboundaries. In section \ref{sec:erg_and_1-cobound} we prove Theorem \ref{introthm:B1_closed_implies_strongly_ergodic} and in section \ref{sec:weakTLp} we prove Theorem \ref{introthm:weakTLp_and_TLp}. Finally, in section \ref{sec:no_easier_proof}, we prove Theorem \ref{introthm:Lp0_is_not_an_Lp-space}.

\paragraph{Acknowledgements} The author is grateful to Mikael de la Salle for the question and for discussions leading to the proof presented in this paper. The author thanks Todor Tsankov for generously sharing his insights into Polish groups. The author thanks Nadia Larsen for comments on an earlier version of this paper leading to improvements in the presentation. The author thanks the Trond Mohn foundation for supporting a research visit in Lyon where this project was initiated.



\section{Preliminaries}\label{sec:pre}
\paragraph{Polish groups}\label{pre-par:polish} A topological space is said to be \emph{Polish} if it is separable and completely metrizable. A topological group is said to be \emph{Polish} if it is Polish as a topological space. We list here a few permanence properties for Polish groups that shall become useful to us later. See, e.g., \cite[Section 3]{Kechris-DescriptiveSetTheory} for a reference.

\begin{prop}\label{prop:Polish_permanence_subgroup}
A closed subgroup of a Polish group is Polish.
\end{prop}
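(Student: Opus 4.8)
The plan is to reduce the statement to a purely topological fact about Polish spaces, namely that a closed subspace of a Polish space is again Polish, and then observe that the algebraic structure transfers for free.

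First I would unwind the definition: $G$ being a Polish group means that its underlying topological space is separable and admits some complete compatible metric; I fix such a metric $d$ on $G$. Let $H\leq G$ be a closed subgroup, equipped with the subspace topology. Since $H$ is a subgroup of the topological group $G$, the restrictions to $H$ of the multiplication and inversion maps are continuous, so $H$ is itself a topological group. It therefore suffices to verify that $H$ is Polish as a topological space.

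For separability I would invoke the fact that a separable metrizable space is second countable, together with the hereditariness of second countability: the subspace $H$ is second countable, and being metrizable it is separable. For complete metrizability I would restrict the complete metric $d$ to $H$. Since $H$ is closed in $G$, and a closed subset of a complete metric space is itself complete, the restriction $d|_{H\times H}$ is a complete metric inducing the subspace topology on $H$. Hence $H$ is separable and completely metrizable, i.e.\ Polish, and combined with the previous paragraph it is a Polish group.

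There is essentially no real obstacle here; the only point requiring a moment's care is the distinction between \emph{completely metrizable} (existence of \emph{some} complete compatible metric) and \emph{complete} (a property of a \emph{fixed} metric). The argument goes through precisely because we are free to first pass to a complete compatible metric on $G$ and only then restrict it, using the closedness of $H$ to preserve completeness; restricting an arbitrary compatible metric need not yield a complete one.
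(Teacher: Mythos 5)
Your proof is correct and is the standard argument; the paper itself offers no proof of this proposition, only a citation to Kechris, and your reduction (closed subspace of a Polish space is Polish via hereditary second countability and restriction of a complete compatible metric, plus continuity of the restricted group operations) is exactly the textbook route. Your closing remark about the distinction between \emph{completely metrizable} and \emph{complete} correctly identifies the only point of care.
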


\begin{prop}\label{prop:Polish_permanence_countable_product}
A countable product of Polish groups is a Polish group.
\end{prop}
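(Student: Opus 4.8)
The plan is to verify directly the three defining features of a Polish group for the product $G=\prod_{n\geq1}G_n$, equipped with coordinatewise operations and the product topology: that it is a topological group, that it is completely metrizable, and that it is separable. First I would check that $G$ is a topological group. The coordinatewise multiplication $m\colon G\times G\to G$ and the coordinatewise inversion are continuous by the universal property of the product topology: a map into $G$ is continuous precisely when its composition with each coordinate projection $\pi_n\colon G\to G_n$ is continuous, and $\pi_n\circ m$ sends $\bigl((g_k)_k,(h_k)_k\bigr)$ to $g_nh_n$, which is continuous because multiplication on the topological group $G_n$ is; inversion is handled identically.

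Next I would produce a single complete metric inducing the product topology. For each $n$ fix a complete metric $d_n$ generating the topology of $G_n$ (which exists by hypothesis), and replace it by the bounded metric $\bar d_n=\min(d_n,1)$, which is still complete and induces the same topology. I would then set
\[
  d\bigl((g_n)_n,(h_n)_n\bigr)=\sum_{n\geq1}2^{-n}\,\bar d_n(g_n,h_n).
\]
The verification has two halves. A sequence converges in $d$ if and only if it converges in every coordinate, so $d$ induces exactly the topology of coordinatewise convergence, which is the product topology; and a $d$-Cauchy sequence is Cauchy in each coordinate (since $\bar d_n(g_n,h_n)\leq 2^{n}\,d((g_n)_n,(h_n)_n)$), hence converges coordinatewise by completeness of the $\bar d_n$, and a standard split of the series into a small tail and finitely many convergent coordinates shows this coordinatewise limit is the $d$-limit. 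Thus $d$ is a complete metric for $G$.

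Finally I would establish separability. Choosing a countable dense set $D_n\subseteq G_n$ for each $n$ and writing $e_n$ for the identity of $G_n$, I would take $D$ to be the collection of sequences $(g_n)_n$ with $g_n\in D_n$ for every $n$ and $g_n=e_n$ for all but finitely many $n$. This is a countable union over $N$ of finite products of countable sets, hence countable, and it meets every nonempty basic open set $\prod_{n\leq N}U_n\times\prod_{n>N}G_n$ by choosing $g_n\in D_n\cap U_n$ for $n\leq N$ and $g_n=e_n$ otherwise, so $D$ is dense. The argument is essentially routine, and the only point genuinely requiring care is the simultaneous verification in the middle step that the one metric $d$ both induces the product topology and is complete: neither a sup-type metric nor an unmodified complete metric on each factor would automatically achieve both, so the weighting by $2^{-n}$ together with the boundedness reduction $\bar d_n=\min(d_n,1)$ is exactly what makes the construction work. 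Countability of the index set is used crucially here, as it is what keeps the product metrizable.
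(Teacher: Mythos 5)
Your proof is correct and is the standard textbook argument (product topology for the group operations, the weighted sum $\sum_n 2^{-n}\min(d_n,1)$ for complete metrizability, and eventually-identity sequences for separability). The paper itself gives no proof of this proposition, citing it as a standard fact from Kechris's \emph{Classical Descriptive Set Theory}, and your argument is essentially the one found there, so there is nothing to reconcile.
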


A main tool in this paper is the Open Mapping Theorem in the setting of Polish groups, which we state below in Theorem \ref{thm:open_mapping_Polish}. It follows directly from Effros' Theorem \cite[Theorem 2.1]{Effros1965} (see also \cite{Ancel1987} and \cite[Theorem 2.2.2]{BeckerKechris1996TheDSofPolish}).
\begin{thm}[Open Mapping Theorem for Polish groups]\label{thm:open_mapping_Polish}
Let $G$ and $H$ be a Polish groups and let $\Phi:G\rightarrow H$ be a continuous and surjective group homomorphism. Then $\Phi$ is open.
\end{thm}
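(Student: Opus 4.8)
The plan is to realize $\Phi$ itself as the orbit map of a continuous transitive action of $G$ and then invoke Effros' Theorem (Theorem \ref{thm:open_mapping_Polish}'s cited source \cite{Effros1965}) as a black box. I would let $G$ act on $H$ by left translation through $\Phi$, that is, define
\[
g\cdot h = \Phi(g)h, \qquad g\in G,\ h\in H.
\]
This is a genuine left action: associativity $g_1\cdot(g_2\cdot h)=\Phi(g_1)\Phi(g_2)h=\Phi(g_1g_2)h=(g_1g_2)\cdot h$ holds because $\Phi$ is a homomorphism, and $e_G\cdot h=\Phi(e_G)h=h$. Continuity of the action map $G\times H\to H$ is immediate from continuity of $\Phi$ together with continuity of multiplication in $H$. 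The decisive observation is that the action is \emph{transitive}: given any $h\in H$, surjectivity of $\Phi$ furnishes $g\in G$ with $\Phi(g)=h$, and then $g\cdot e_H=h$, so the orbit of the identity $e_H$ is all of $H$.

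With this setup the proof is essentially a one-line application. Both $G$ and $H$ are Polish by hypothesis, and the orbit of $e_H$ equals $H$, which—being Polish—is non-meager in itself. Hence Effros' Theorem applies and yields that the orbit map $G\to H$, $g\mapsto g\cdot e_H$, is open. But this orbit map is precisely
\[
g\longmapsto g\cdot e_H=\Phi(g)e_H=\Phi(g),
\]
so it coincides with $\Phi$. Therefore $\Phi$ is open, as claimed.

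The only point I expect to require care is matching the hypotheses to the exact form of Effros' Theorem cited from \cite{Effros1965}, since that result is frequently stated as \emph{micro-transitivity} of a continuous transitive action—namely that $U\cdot e_H$ is a neighbourhood of $e_H$ for every neighbourhood $U$ of $e_G$—rather than as openness of the orbit map outright. Working from the micro-transitivity formulation, I would close the gap with a short homogeneity argument: micro-transitivity gives that $\Phi(U)$ is a neighbourhood of $e_H$, and since $\Phi$ is a homomorphism, $\Phi(g_0U)=\Phi(g_0)\Phi(U)$ is the image of a neighbourhood of $e_H$ under the homeomorphism $h\mapsto\Phi(g_0)h$ of $H$, hence a neighbourhood of $\Phi(g_0)$; as the sets $g_0U$ form a basis for the topology of $G$, openness of $\Phi$ follows. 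All the substantive content is carried by Effros' Theorem—the reduction of the open mapping statement to the translation action, and the upgrade from micro-transitivity to global openness, are routine.
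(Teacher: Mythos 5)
Your proof is correct and takes exactly the route the paper intends: the paper gives no detailed argument but states that the theorem ``follows directly from Effros' Theorem,'' and your translation action $g\cdot h=\Phi(g)h$, together with the verification of continuity, transitivity via surjectivity, and the upgrade from micro-transitivity to openness of $\Phi$ using left-translation homogeneity, is precisely the deduction being left implicit. No gaps; the details you supply (non-meagerness of the Polish space $H$, the basis argument with sets $g_0U$) are the right ones.
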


\paragraph{Actions on measure spaces}
An introduction to group actions on measure spaces can be found, e.g., in \cite[Section A.6]{BekkaDeLaHarpeValette}. We recall briefly the main definitions. Let $\Gamma$ be a discrete group. Given a $\sigma$-finite measure space $(\Omega,\nu)$, we denote by $\aut(\Omega,[\nu])$ the group of all bi-measurable transformations of $\Omega$ that leave $\nu$ quasi-invariant. A \emph{measure class preserving action} of $\Gamma$ on $(\Omega,\nu)$ is a group homomorphism $\sigma:\Gamma\rightarrow\aut(\Omega,[\nu])$. We write $\Gamma\act{\sigma}(\Omega,\nu)$ for the action of $\Gamma$ on $(\Omega,\nu)$ given by $\sigma$. For $t\in\Gamma$ and $\omega\in\Omega$, we shall often write $t.\omega$ instead of $\sigma_t(\omega)$. Further, we denote by $t.\nu$ the push forward measure of $\nu$ by $\sigma_t$. For each $t\in\Gamma$, we denote by $\tfrac{\mathrm{d}t.\nu}{\mathrm{d}\nu}$ the Radon-Nikodym derivative of $t.\nu$ with respect to $\nu$. The assumption that the action is measure class preserving ensures that this Radon-Nikodym derivative exists. Recall that it is a strictly positive function. Let $L^0(\Omega,\nu)$ denote the space of (equivalence classes of) measurable complex-valued functions on $(\Omega,\nu)$. For $\xi\in L^0(\Omega,\nu)$ and $t\in\Gamma$, we denote by $t.\xi$ be the measurable function given by $t.\xi(\omega)=\xi(t^{-1}.\omega)$, for $\omega\in\Omega$. In this way, the action $\Gamma\act{\sigma}(\Omega,\nu)$ induces in a canonical way an action of $\Gamma$ on $L^0(\Omega,\nu)$.

When an action $\Gamma\act{\sigma}(\Omega,\nu)$ leaves the measure $\nu$ invariant rather than just quasi-invariant, we say that it is \emph{measure preserving}. A measure preserving action on a probability space is called a  \emph{probability measure preserving action} (in short: a p.m.p. action). If the action is measure preserving, $\tfrac{\mathrm{d}t.\nu}{\mathrm{d}\nu}$ is everywhere equal to $1$, for all $t\in\Gamma$.

\paragraph{Ergodicity and strong ergodicity}
Let $\Gamma\act{\sigma}(\Omega,\nu)$ be a measure class preserving action of a discrete group on a $\sigma$-finite measure space. A measurable subset $A$ of $\Omega$ is said to be \emph{$\Gamma$-invariant} if $\nu(t.A\triangle A)=0$, for all $t\in\Gamma$. Observe that null and co-null subsets are trivially $\Gamma$-invariant. We say that the action $\Gamma\act{\sigma}(\Omega,\nu)$ is \emph{ergodic} if there are no non-trivial $\Gamma$-invariant measurable subsets of $\Omega$.

When $\Gamma\act{\sigma}(\Omega,\nu)$ is a p.m.p. action, a stronger version of ergodicity is defined as follows: A sequence $\seq{A}{n}$ of measurable subsets of $\Omega$ is said to be \emph{asymptotically $\Gamma$-invariant} if $\nu(t.A_n\triangle A_n)$ converges to zero, for all $t\in\Gamma$. The sequence is trivially asymptotically $\Gamma$-invariant if $\liminf_n\nu(A_n)\nu(A_n^{\complement})=0$. The action $\Gamma\act{\sigma}(\Omega,\nu)$ is said to be \emph{strongly ergodic} if there are no non-trivial asymptotically $\Gamma$-invariant sequences of measurable subsets of $\Omega$. It is clear that any strongly ergodic action is automatically ergodic. A deep result by Connes and Weiss in \cite{ConnesWeiss1980} (see also \cite[Theorem 6.3.4]{BekkaDeLaHarpeValette}) shows that the converse implication characterizes groups with property $(\rm T)$. We state it here for discrete groups.
\begin{thm}[Connes--Weiss]\label{thm:ConnesWeiss}
A discrete group $\Gamma$ has property $(\rm T)$ if and only if every ergodic p.m.p. action is strongly ergodic.
\end{thm}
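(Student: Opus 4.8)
This is a biconditional, and I would prove its two halves by completely different methods: the implication ``property $(\mathrm T)$ $\Rightarrow$ strong ergodicity'' is a short argument through the Koopman representation, while the converse is the substantial part and rests on the Gaussian functor. Throughout I take $\Gamma$ countable, as in the intended statement, so that the probability spaces produced below are standard. For the forward implication I argue by contraposition. Suppose $\Gamma\act{\sigma}(\Omega,\nu)$ is ergodic but \emph{not} strongly ergodic, and fix a non-trivial asymptotically $\Gamma$-invariant sequence $\seq{A}{n}$; after passing to a subsequence we may assume $\inf_n\nu(A_n)\nu(A_n^{\complement})>0$. Let $\pi$ be the Koopman representation of $\Gamma$ on the complex Hilbert space $L^2_0(\Omega,\nu)$ of mean-zero functions and set $\xi_n=\mathbf 1_{A_n}-\nu(A_n)$. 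Since $\sigma$ preserves $\nu$ the additive constants cancel, giving $\norm{\pi(t)\xi_n-\xi_n}{2}^2=\nu(t.A_n\triangle A_n)\to0$ for every $t\in\Gamma$, whereas $\norm{\xi_n}{2}^2=\nu(A_n)\nu(A_n^{\complement})$ is bounded away from $0$; hence the normalised vectors $\xi_n/\norm{\xi_n}{2}$ are almost invariant unit vectors. By ergodicity every $\Gamma$-invariant function is constant, so $L^2_0(\Omega,\nu)$ has no non-zero invariant vector, and $\Gamma$ fails property $(\mathrm T)$.

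For the converse, assume $\Gamma$ does not have property $(\mathrm T)$, so that some unitary representation carries almost invariant unit vectors but no non-zero invariant vector. The first step is to reduce to a \emph{weakly mixing} orthogonal representation $(\pi,H)$ on a real Hilbert space which still possesses almost invariant unit vectors $\seq{\xi}{n}$, $\norm{\xi_n}{H}=1$; weak mixing then rules out non-zero invariant vectors automatically, since such a vector would span a one-dimensional subrepresentation. Applying the Gaussian functor to $(\pi,H)$ yields a standard probability space $(\Omega,\nu)$ carrying a p.m.p. action $\Gamma\act{\sigma}(\Omega,\nu)$ together with an isometric embedding $\xi\mapsto\hat\xi$ of $H$ into $L^2_0(\Omega,\nu)$ under which each $\hat\xi$ is a centred Gaussian variable with $\E[\hat\xi\,\hat\eta]=\inner{\xi}{\eta}$ and $g.\hat\xi=\widehat{\pi(g)\xi}$ on the first chaos. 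The correspondence between weak mixing of $(\pi,H)$ and weak mixing of the Gaussian action then guarantees that $\sigma$ is ergodic.

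It remains to produce a non-trivial asymptotically invariant sequence. Put $A_n=\{\,\hat\xi_n>0\,\}$, so that $\nu(A_n)=\tfrac12$ and, tracing the Koopman action on the first chaos, $g.A_n=\{\,\widehat{\pi(g)\xi_n}>0\,\}$ up to a null set. Since $\widehat{\pi(g)\xi_n}$ and $\hat\xi_n$ are jointly Gaussian with unit variances and covariance $\rho_n(g)=\inner{\pi(g)\xi_n}{\xi_n}$, the Gaussian sign (Sheppard/arccosine) identity gives
\[
\nu(g.A_n\triangle A_n)=\tfrac1\pi\arccos\rho_n(g).
\]
From $\norm{\pi(g)\xi_n-\xi_n}{H}^2=2-2\rho_n(g)\to0$ we get $\rho_n(g)\to1$, hence $\nu(g.A_n\triangle A_n)\to0$ for every $g\in\Gamma$, while $\nu(A_n)\nu(A_n^{\complement})=\tfrac14$. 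Thus $\seq{A}{n}$ is non-trivial and asymptotically invariant, so the ergodic action $\sigma$ is not strongly ergodic, completing the converse.

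The main obstacle lies entirely in the converse, and more precisely in securing ergodicity of the Gaussian action. One must first carry out the reduction to a weakly mixing representation, dealing with any finite-dimensional subrepresentations of the representation furnished by the failure of $(\mathrm T)$, and then invoke the equivalence between weak mixing of $(\pi,H)$ and of the associated Gaussian action; the latter is read off from the Fock-space decomposition $L^2_0(\Omega,\nu)\cong\bigoplus_{n\geq1}\mathrm{Sym}^n(H_{\mathbb C})$ of the Koopman representation, where invariant or finite-dimensional pieces in higher chaos trace back to finite-dimensional subrepresentations of $\pi$. By contrast, once the arccosine identity is available the sign-set estimate above is routine.
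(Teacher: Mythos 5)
The paper does not prove this theorem: it is stated as a known result of Connes and Weiss, with \cite{ConnesWeiss1980} and \cite[Theorem 6.3.4]{BekkaDeLaHarpeValette} cited for the proof. Your argument is exactly that standard proof --- the Koopman representation on $L^2_0(\Omega,\nu)$ applied to $1_{A_n}-\nu(A_n)$ for the forward direction, and the Gaussian functor plus the arccosine identity for the sets $\{\hat\xi_n>0\}$ for the converse --- and it is correct in outline; the one step left unexpanded (which you correctly flag) is the reduction to a weakly mixing representation with almost invariant vectors, i.e.\ the known fact that a group fails property $(\mathrm T)$ if and only if it admits a unitary representation with almost invariant vectors and no non-zero finite-dimensional subrepresentation.
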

\begin{rk}\label{rk:ConnesWeiss}
In Theorem \ref{thm:ConnesWeiss}, it is enough to consider actions on diffuse standard probability spaces, i.e., probability spaces that are isomorphic mod $0$ to the interval with the Lebesgue measure. This follows from the proof of Corollary A.7.15 in \cite{BekkaDeLaHarpeValette} and Theorem 17.41 in \cite{Kechris-DescriptiveSetTheory}.
\end{rk}
We refer to \cite[Section 6.3]{BekkaDeLaHarpeValette} for more background on ergodicity of group actions and a comprehensive review of the connection between ergodicity and property $(\rm T)$.

\paragraph{The $\T$-valued measurable functions as a topological group}
Let $(\Omega,\nu)$ be a probability space. We denote by $L^0(\Omega,\nu;\T)$ the set of measurable functions on $(\Omega,\nu)$ with values in $\T$. We have a natural group structure on $L^0(\Omega,\nu;\T)$ with multiplication defined pointwise. The multiplicative unit is the function $1_{\Omega}$ which is everywhere equal to $1$. The inverse of a function in $L^0(\Omega,\nu;\T)$ is its complex conjugate. We equip $L^0(\Omega,\nu;\T)$ with the topology of convergence in measure, i.e., the topology generated by the sets of the form
\[
V_{\varepsilon}(\varphi_0)=\set{\varphi\in L^0(\Omega,\nu;\T)}{\nu(\{\abs{\varphi-\varphi_0}\geq\varepsilon\})<\varepsilon},\label{defeq:top_on_L^0}
\]
where $\varphi_0\in L^0(\Omega,\nu;\T)$ and $\varepsilon>0$. A standard computation verifies that this makes $L^0(\Omega,\nu;\T)$ a topological group. Lemma \ref{lem:top_on_L0(Omega,nu;D)} below is well-known to experts. It shows, in particular, that the topology on $L^0(\Omega,\nu;\T)$ is metrizable. 
\begin{lem}\label{lem:top_on_L0(Omega,nu;D)}
Let $(\Omega,\nu)$ be a probability space and fix $1\leq p <\infty$. The topology on $L^0(\Omega,\nu;\T)$ generated by the sets in (\ref{defeq:top_on_L^0}) is equivalent to the topology generated by the sets
\[
V^p_{\varepsilon}(\varphi_0)=\set{\varphi\in L^0(\Omega,\nu;\T)}{\norm{\varphi-\varphi_0}{p}<\varepsilon},
\]
where $\varphi_0\in L^0(\Omega,\nu;\T)$ and $\varepsilon>0$.
\end{lem}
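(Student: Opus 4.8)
The plan is to prove the equivalence of the two topologies by establishing mutual containment of basic neighborhoods. Since both topologies are translation-invariant group topologies on $L^0(\Omega,\nu;\T)$ (multiplication by a fixed function is a homeomorphism in each), it suffices to compare the neighborhood bases at the identity $1_\Omega$; that is, I would show that for every $\varepsilon>0$ there exist $\delta,\delta'>0$ with $V^p_{\delta'}(1_\Omega)\subseteq V_\varepsilon(1_\Omega)$ and $V_\delta(1_\Omega)\subseteq V^p_\varepsilon(1_\Omega)$. The key simplification is that all functions involved take values in $\T$, so $|\varphi-1_\Omega|$ is uniformly bounded by $2$, and convergence in measure, convergence in $L^p$, and convergence in $L^1$ all coincide on bounded sets.

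For the first inclusion (control of measure by the $L^p$-norm), I would use Chebyshev's inequality: if $\nu(\{|\varphi-1_\Omega|\geq\varepsilon\})=:m$, then $\|\varphi-1_\Omega\|_p^p = \int |\varphi-1_\Omega|^p\,\mathrm{d}\nu \geq \varepsilon^p\, m$, so $m \leq \varepsilon^{-p}\|\varphi-1_\Omega\|_p^p$. Choosing $\delta'$ small enough that $\varepsilon^{-p}(\delta')^p<\varepsilon$ (for instance $\delta' = \varepsilon^{1+1/p}$) forces $m<\varepsilon$, giving $V^p_{\delta'}(1_\Omega)\subseteq V_\varepsilon(1_\Omega)$. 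For the reverse inclusion (control of the $L^p$-norm by measure), I would split the integral according to whether $|\varphi-1_\Omega|$ is small or large. If $\varphi\in V_\delta(1_\Omega)$, set $E=\{|\varphi-1_\Omega|\geq\delta\}$, so $\nu(E)<\delta$. On the complement $E^\complement$ the integrand is at most $\delta^p$, contributing at most $\delta^p\nu(E^\complement)\leq\delta^p$ to $\|\varphi-1_\Omega\|_p^p$; on $E$, using $|\varphi-1_\Omega|\leq 2$, the contribution is at most $2^p\nu(E)<2^p\delta$. Hence $\|\varphi-1_\Omega\|_p^p \leq \delta^p + 2^p\delta$, which can be made smaller than $\varepsilon^p$ by choosing $\delta$ small enough, yielding $V_\delta(1_\Omega)\subseteq V^p_\varepsilon(1_\Omega)$.

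I do not expect a genuine obstacle here; the statement is elementary and the only thing to be careful about is the bookkeeping with $p$-th powers versus the norms themselves, and the use of translation invariance to reduce from arbitrary centers $\varphi_0$ to the identity. One minor point worth stating explicitly is that both families of sets do generate group topologies and that the maps $\varphi\mapsto\varphi\varphi_0$ are homeomorphisms, so that comparing bases at the identity genuinely suffices; this is the standard fact that a neighborhood base at the identity determines a topological group topology. With the two containments in hand, the identity map between the two topological spaces is a homeomorphism, so the topologies coincide.
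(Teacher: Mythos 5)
Your proof is correct and follows essentially the same route as the paper's: a Chebyshev-type estimate to pass from $L^p$-smallness to smallness in measure, and a splitting of the integral over $\{|\varphi-\varphi_0|\geq\varepsilon\}$ and its complement (using the uniform bound $2$ on $\T$-valued differences) for the converse, with the same choice $\delta'=\varepsilon^{1+1/p}$. The only cosmetic difference is your reduction to neighborhoods of $1_\Omega$ via translation invariance; the paper skips this because the estimates are uniform in the center $\varphi_0$ and so apply verbatim at every point.
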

\begin{proof}
Denote by $\mathcal{T}_0$ the topology on $L^0(\Omega,\nu;\T)$ generated by the sets of the form $V_{\varepsilon}(\varphi_0)$ and by $\mathcal{T}_p$ the topology generated by the sets of the form $V^p_{\varepsilon}(\varphi_0)$. Fix $\varphi_0\in L^0(\Omega,\nu;\T)$ and $\varepsilon>0$. If $\varphi\in L^0(\Omega,\nu;\T)$ is such that $\norm{\varphi-\varphi_0}{p}^p<\varepsilon^{p+1}$ then
\[
\nu\left(\{\abs{\varphi-\varphi_0}\geq\varepsilon\}\right) 
\leq \frac{1}{\varepsilon^p}\integral{\abs{\varphi-\varphi_0}^p}{\{\abs{\varphi-\varphi_0}\geq\varepsilon\}}{\nu}
\leq \frac{1}{\varepsilon^p}\integral{\abs{\varphi-\varphi_0}^p}{\Omega}{\nu}<\varepsilon.
\]
Hence, $V^p_{\varepsilon^{1+1/p}}(\varphi_0)\subset V_{\varepsilon}(\varphi_0)$. This shows that $\mathcal{T}_p$ is finer than $\mathcal{T}_0$. Conversely, if $\varphi\in L^0(\Omega,\nu;\T)$ is such that $\nu(\{\abs{\varphi-\varphi_0}\geq\varepsilon\})<\varepsilon$ then
\begin{align*}
\integral{\abs{\varphi-\varphi_0}^p}{\Omega}{\nu}&=\integral{\abs{\varphi-\varphi_0}^p}{\{\abs{\varphi-\varphi_0}\geq\varepsilon\}}{\nu}+\integral{\abs{\varphi-\varphi_0}^p}{\{\abs{\varphi-\varphi_0}<\varepsilon\}}{\nu}\\
&\leq 2^p\,\nu\left(\{\abs{\varphi-\varphi_0}\geq\varepsilon\}\right)+\varepsilon^p\,\nu\left(\{\abs{\varphi-\varphi_0}<\varepsilon\}\right)\\
&<\varepsilon2^p+\varepsilon^p.
\end{align*}
Hence, $V_{\varepsilon}(\varphi_0)\subset V^p_{(\varepsilon2^p+\varepsilon^p)^{1/p}}(\varphi_0)$. Since $(\varepsilon2^p+\varepsilon^p)^{1/p}\rightarrow0$ as $\varepsilon\rightarrow0$, this shows that $\mathcal{T}_0$ is finer than $\mathcal{T}_p$. Hence, the two topologies are equivalent.
\end{proof}

Recall that a measure space $(\Omega,\nu)$ is said to be separable if the space of measurable subsets of $\Omega$ is separable as a topological space with respect to the distance given by $\nu(A\triangle B)$, for $A,B\subset\Omega$ measurable.

\begin{prop}\label{prop:L^0(Omega,nu)_is_Polish}
Let $(\Omega,\nu)$ be a separable probability space. Then $L^0(\Omega,\nu;\T)$ is a Polish group with multiplication defined pointwise and with the topology generated by the sets defined in equation (\ref{defeq:top_on_L^0}).
\end{prop}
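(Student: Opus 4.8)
The plan is to exhibit a single complete, compatible metric and to deduce separability from that of an ambient $L^p$-space, taking for granted (as already noted) that $L^0(\Omega,\nu;\T)$ is a topological group for this topology. Fix $1\le p<\infty$. By Lemma \ref{lem:top_on_L0(Omega,nu;D)} the topology of convergence in measure coincides with the topology induced by $d(\varphi,\psi)=\norm{\varphi-\psi}{p}$, where I regard elements of $L^0(\Omega,\nu;\T)$ as elements of $L^p(\Omega,\nu)$; this is legitimate because $\T$-valued functions are bounded, hence $p$-integrable on the probability space $(\Omega,\nu)$. For $p\ge1$ this $d$ is a genuine metric by Minkowski's inequality, and it is translation-invariant since $\abs{\varphi}\equiv1$. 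It therefore suffices to show that $(L^0(\Omega,\nu;\T),d)$ is a complete, separable metric space, as the group operations are already known to be continuous.

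For completeness I would argue as follows. Let $\seq{\varphi}{n}$ be a $d$-Cauchy sequence. Viewed inside the Banach space $L^p(\Omega,\nu)$ it is Cauchy there, so it converges in $L^p$-norm to some $\varphi\in L^p(\Omega,\nu)$. Passing to a subsequence converging $\nu$-almost everywhere and using that $\abs{\varphi_n}=1$ $\nu$-a.e. for every $n$, one obtains $\abs{\varphi}=1$ $\nu$-a.e. Hence, after modification on a null set, $\varphi$ represents an element of $L^0(\Omega,\nu;\T)$, and $\varphi_n\to\varphi$ with respect to $d$. This is the step with genuine content, and I expect it to be the main (mild) obstacle: one must verify that the class of $\T$-valued functions is closed under limits in measure, which is precisely what the almost-everywhere convergent subsequence delivers.

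For separability I would invoke the hereditary principle that a metric subspace of a separable metric space is itself separable. Since $(\Omega,\nu)$ is a separable probability space, its measure algebra is separable for the metric $\nu(A\triangle B)$; a standard approximation—by simple functions built from a countable dense subalgebra, with coefficients having rational real and imaginary parts—then shows that $L^p(\Omega,\nu)$ is separable for $1\le p<\infty$. As $L^0(\Omega,\nu;\T)$ sits isometrically inside $L^p(\Omega,\nu)$ for the metric $d$, it inherits separability. Combining the three ingredients—metrizability from Lemma \ref{lem:top_on_L0(Omega,nu;D)}, completeness of $d$, and separability—with the continuity of the group operations shows that $L^0(\Omega,\nu;\T)$ is a Polish group.
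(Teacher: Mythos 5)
Your proof is correct and follows essentially the same route as the paper's: the paper embeds $L^0(\Omega,\nu;\T)$ into $L^1(\Omega,\nu)$ (you allow any $L^p$, which changes nothing), invokes Lemma \ref{lem:top_on_L0(Omega,nu;D)} for compatibility of the topologies, deduces complete metrizability from closedness in the Banach space, and gets separability from that of $L^p(\Omega,\nu)$. The only difference is that you spell out the a.e.-subsequence argument for closedness, which the paper dismisses as ``a standard computation.''
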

\begin{proof}
Because $\nu$ is finite, any measurable function with values in $\T$ is integrable. Hence, we can view $L^0(\Omega,\nu;\T)$ as a subset of $L^1(\Omega,\nu)$. By Lemma \ref{lem:top_on_L0(Omega,nu;D)}, the subspace topology on $L^0(\Omega,\nu;\T)$ coming from this embedding agrees with the topology generated by the sets defined in equation (\ref{defeq:top_on_L^0}). It is a standard computation to verify that $L^0(\Omega,\nu;\T)$ is closed in $L^1(\Omega,\nu)$. Hence, $L^0(\Omega,\nu;\T)$ is completely metrizable. Finally, as $(\Omega,\nu)$ is separable, $L^1(\Omega,\nu)$ is separable and then so is $L^0(\Omega,\nu;\T)$.
\end{proof}

\paragraph{Groups of $1$-cocycles and $1$-coboundaries}
Let $\Gamma$ be a discrete group and $\Gamma\act{\sigma}(\Omega,\nu)$ a measure class preserving action. A ($\T$-valued) \emph{$1$-cocycle} for the action $\Gamma\act{\sigma}(\Omega,\nu)$ is a map $c:\Gamma\times\Omega\rightarrow\T$ such that $c_t$ is a measurable map, for every $t\in\Gamma$, and such that, for every pair $s,t\in \Gamma$ and almost every $\omega\in\Omega$,
\[
c_{st}(\omega)=c_s(\omega)c_t(s^{-1}.\omega).
\]
The set of all $1$-cocycles is denoted by $Z^1(\sigma;\T)$. Given $\varphi\in L^0(\Omega,\nu;\T)$, define a map $b_{\varphi}:\Gamma\times\Omega\rightarrow\T$ by
\[
b_{\varphi}(t,\omega)=\frac{\varphi(\omega)}{\varphi(t^{-1}.\omega)},\numberthis\label{eqn:def:coboundary}
\]
for $t\in\Gamma$ and $\omega\in\Omega$. It is straight forward to verify that $b_{\varphi}$ is a $1$-cocycle. A $1$-cocycle of this form is called a \emph{$1$-coboundary}. The set of all $1$-coboundaries is denoted by $B^1(\sigma;\T)$.

We equip $Z^1(\sigma;\T)$ with a group structure as follows: Given two $1$-cocycles $c,d\in Z^1(\sigma;\T)$, their product is defined via the multiplication on $\T$ by setting $(c\cdot d)_t(\omega)=c_t(\omega)d_t(\omega)$, for $t\in\Gamma$ and $\omega\in\Omega$. Because $\T$ is an abelian group, $c\cdot d$ is again a $1$-cocycle, and so, $\cdot$ gives a well-defined multiplication on $Z^1(\sigma;\T)$. The $1$-coboundary $b_{1_{\Omega}}$ is the multiplicative identity. The inverse of a $1$-cocycle is given by its complex conjugate. Further, we have a canonical embedding of groups $Z^1(\sigma;\T)\hookrightarrow L^0(\Omega,\nu;\T)^{\Gamma}$ given by mapping $c\in Z^1(\sigma;\T)$ to the $\Gamma$-indexed sequence $(c_t)_{t\in\Gamma}$. This embedding gives $Z^1(\sigma;\T)$ the structure of a topological group inheriting the product topology from $L^0(\Omega,\nu;\T)^{\Gamma}$. It can be verified with a standard computation that, as such, it is closed. We include a proof in Proposition \ref{prop:Z^1_is_Polish} below that $Z^1(\sigma;\T)$ is Polish under the additional assumptions that $\Gamma$ is countable and $(\Omega,\nu)$ is separable. This fact can be found without proof in \cite[Section 24]{Kechris2010GlobalAspErgGpAct}.
\begin{prop}\label{prop:Z^1_is_Polish}
Let $\Gamma$ be a countable discrete group and let $\Gamma\act{\sigma}(\Omega,\nu)$ be a measure class preserving action on a separable probability space. Then $Z^1(\sigma;\T)$ is a Polish group.
\end{prop}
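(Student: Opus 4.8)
The plan is to realise $Z^1(\sigma;\T)$ as a closed subgroup of the Polish group $L^0(\Omega,\nu;\T)^{\Gamma}$ and then invoke the permanence properties recorded earlier. Since $\Gamma$ is countable and $L^0(\Omega,\nu;\T)$ is Polish by Proposition \ref{prop:L^0(Omega,nu)_is_Polish}, the product $L^0(\Omega,\nu;\T)^{\Gamma}$ is a countable product of Polish groups, hence itself Polish by Proposition \ref{prop:Polish_permanence_countable_product}. As already observed in the text, the embedding $c\mapsto(c_t)_{t\in\Gamma}$ identifies $Z^1(\sigma;\T)$ with a subgroup of this product equipped with the subspace topology. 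By Proposition \ref{prop:Polish_permanence_subgroup} it therefore suffices to prove that this subgroup is closed; the conclusion that $Z^1(\sigma;\T)$ is Polish follows at once.

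Because a countable product of metrizable spaces is metrizable, I may test closedness with sequences. Suppose $(c^{(n)})_{n\geq1}$ is a sequence in $Z^1(\sigma;\T)$ whose image converges in $L^0(\Omega,\nu;\T)^{\Gamma}$ to some element $(d_t)_{t\in\Gamma}$; by the definition of the product topology this means $c^{(n)}_t\to d_t$ in $L^0(\Omega,\nu;\T)$ (i.e. in measure) for every $t\in\Gamma$. I must show that $(d_t)_t$ satisfies the cocycle relation $d_{st}(\omega)=d_s(\omega)\,d_t(s^{-1}.\omega)$ for almost every $\omega$ and every pair $s,t\in\Gamma$. Fix such a pair and pass to the limit in the identity $c^{(n)}_{st}=c^{(n)}_s\cdot(s.c^{(n)}_t)$, which holds in $L^0(\Omega,\nu;\T)$ since each $c^{(n)}$ is a cocycle, where I write $(s.\varphi)(\omega)=\varphi(s^{-1}.\omega)$ for the induced action. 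The left-hand side converges to $d_{st}$. For the right-hand side, multiplication is jointly continuous on the topological group $L^0(\Omega,\nu;\T)$, so it is enough to know that the map $\varphi\mapsto s.\varphi$ is continuous for convergence in measure; granting this, the right-hand side converges to $d_s\cdot(s.d_t)$. Uniqueness of limits in the Hausdorff space $L^0(\Omega,\nu;\T)$ then yields $d_{st}=d_s\cdot(s.d_t)$ as elements of $L^0(\Omega,\nu;\T)$, which is exactly the cocycle relation almost everywhere. Since there are only countably many pairs $(s,t)$ and each condition holds on a co-null set, $(d_t)_t$ lies in $Z^1(\sigma;\T)$, so the subgroup is closed.

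The one step requiring care, and the main obstacle, is the continuity of $\varphi\mapsto s.\varphi$ for convergence in measure; this is where the hypothesis that $\sigma$ is merely measure class preserving (rather than measure preserving) enters. If $\nu(\{\abs{c^{(n)}_t-d_t}\geq\varepsilon\})\to0$, then setting $A_n=\{\abs{c^{(n)}_t-d_t}\geq\varepsilon\}$ one computes
\[
\nu\bigl(\{\abs{s.c^{(n)}_t-s.d_t}\geq\varepsilon\}\bigr)=\nu(s.A_n)=(s^{-1}.\nu)(A_n).
\]
Since the action preserves the measure class, $s^{-1}.\nu$ is a finite measure absolutely continuous with respect to $\nu$, so $\nu(A_n)\to0$ forces $(s^{-1}.\nu)(A_n)\to0$ by absolute continuity of the integral of the Radon--Nikodym derivative $\tfrac{\mathrm{d}s^{-1}.\nu}{\mathrm{d}\nu}$. (When $\sigma$ is p.m.p. this is immediate, as $\nu(s.A_n)=\nu(A_n)$.) This establishes the required continuity and completes the argument.
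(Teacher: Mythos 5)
Your proof is correct and follows the same route as the paper: realize $Z^1(\sigma;\T)$ as a subgroup of the countable product $L^0(\Omega,\nu;\T)^{\Gamma}$, which is Polish by Propositions \ref{prop:L^0(Omega,nu)_is_Polish} and \ref{prop:Polish_permanence_countable_product}, and conclude via closedness and Proposition \ref{prop:Polish_permanence_subgroup}. The only difference is that the paper defers the closedness of $Z^1(\sigma;\T)$ to a ``standard computation'' mentioned in the preliminaries, whereas you carry it out explicitly --- including the correct observation that continuity of $\varphi\mapsto s.\varphi$ in measure needs absolute continuity of $s^{-1}.\nu$ with respect to $\nu$, which is exactly what the measure class preserving hypothesis provides.
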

\begin{proof}
When $\Gamma$ is countable discrete and $(\Omega,\nu)$ is a separable probability space, $L^0(\Omega,\nu;\T)^{\Gamma}$ is a Polish group by Proposition \ref{prop:L^0(Omega,nu)_is_Polish} and Proposition \ref{prop:Polish_permanence_countable_product}. Because $Z^1(\sigma;\T)$ is closed in $L^0(\Omega,\nu;\T)^{\Gamma}$, it follows from Proposition \ref{prop:Polish_permanence_subgroup} that $Z^1(\sigma;\T)$ is a Polish group.
%
\end{proof}
We equip $B^1(\sigma;\T)$ with the subspace topology. When $B^1(\sigma;\T)$ is closed, it is Polish. But this need not be the case. In particular, we shall see in the proof of Theorem \ref{introthm:B1_closed_implies_strongly_ergodic} that $B^1(\sigma;\T)$ is not Polish when $\sigma$ is an ergodic but not strongly ergodic p.m.p. action. By the characterization of property $(\rm T)$ by Connes and Weiss (Theorem \ref{thm:ConnesWeiss}), every group without property $(\rm T)$ admits such an action.

For more background on $1$-cocycles and $1$-coboundaries, we refer to \cite[Chapter 3]{Kechris2010GlobalAspErgGpAct}.

\section{Proof of Theorem \ref{introthm:B1_closed_implies_strongly_ergodic}}\label{sec:erg_and_1-cobound}
In this section, we prove Theorem \ref{introthm:B1_closed_implies_strongly_ergodic} from the introduction (Theorem \ref{thm:B1_closed_implies_strongly_ergodic} below), which gives a connection between the closure of the space of $\T$-valued $1$-coboundaries and strong ergodicity of the action. The main tool in the proof is an application of the Open Mapping Theorem for Polish groups (Theorem \ref{thm:open_mapping_Polish}) to the map $\beta:L^0(\Omega,\nu;\T)\rightarrow B^1(\sigma;\T)$ given by $\beta(\varphi)=b_{\varphi}$, where $b_{\varphi}$ is as defined equation (\ref{eqn:def:coboundary}).

\begin{lem}\label{lem:beta:L0->B1}
Let $\Gamma$ be a discrete group and let $\Gamma\act{\sigma}(\Omega,\nu)$ be an ergodic p.m.p. action. For each $\varphi\in L^0(\Omega,\nu;\T)$, let $b_{\varphi}\in B^1(\sigma;\T)$ be as in (\ref{eqn:def:coboundary}). The map $\beta:L^0(\Omega,\nu;\T)\rightarrow B^1(\sigma;\T)$ given by $\beta(\varphi)=b_{\varphi}$ is a continuous and surjective group homomorphism whose kernel is the subgroup of constant functions.
\end{lem}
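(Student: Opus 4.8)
The plan is to verify the four asserted properties in turn; only the kernel computation will use ergodicity in an essential way. That $\beta$ is a group homomorphism is immediate from the defining formula (\ref{eqn:def:coboundary}): for $\varphi,\psi\in L^0(\Omega,\nu;\T)$ one has
\[
b_{\varphi\psi}(t,\omega)=\frac{\varphi(\omega)\psi(\omega)}{\varphi(t^{-1}.\omega)\psi(t^{-1}.\omega)}=b_{\varphi}(t,\omega)\,b_{\psi}(t,\omega),
\]
so $\beta(\varphi\psi)=\beta(\varphi)\beta(\psi)$. Surjectivity is built into the definition of the codomain, since $B^1(\sigma;\T)$ was defined to be precisely the set of $1$-coboundaries, i.e.\ the image of $\beta$.

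For continuity I would exploit that $B^1(\sigma;\T)$ carries the subspace topology inherited from the product $L^0(\Omega,\nu;\T)^{\Gamma}$, so by the universal property of the product topology it suffices to show, for each fixed $t\in\Gamma$, that the coordinate map $\varphi\mapsto(b_{\varphi})_t$ is continuous from $L^0(\Omega,\nu;\T)$ to itself. Writing $t.\varphi$ for the function $\omega\mapsto\varphi(t^{-1}.\omega)$ and using that $1/z=\overline{z}$ on $\T$, this coordinate map is $\varphi\mapsto\varphi\cdot\overline{t.\varphi}$. The map $\varphi\mapsto t.\varphi$ is continuous: since $\nu$ is invariant (the action is p.m.p.), one has $\{\abs{t.\varphi-t.\psi}\geq\varepsilon\}=t.\{\abs{\varphi-\psi}\geq\varepsilon\}$, and these two sets have equal measure, so $\varphi\mapsto t.\varphi$ is in fact an isometry for convergence in measure. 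As $L^0(\Omega,\nu;\T)$ is a topological group (Proposition \ref{prop:L^0(Omega,nu)_is_Polish}), multiplication and inversion are continuous, hence so is the composite $\varphi\mapsto(\varphi,t.\varphi)\mapsto\varphi\cdot\overline{t.\varphi}$. This gives continuity of $\beta$.

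The substantive step is identifying the kernel. By the homomorphism property, $\varphi\in\ker\beta$ exactly when $b_{\varphi}$ is the trivial coboundary, i.e.\ $\varphi(\omega)=\varphi(t^{-1}.\omega)$ for every $t\in\Gamma$ and almost every $\omega$; equivalently $\varphi$ is $\Gamma$-invariant as an element of $L^0(\Omega,\nu;\T)$. Constant functions are clearly invariant, so the real content is that ergodicity forces every invariant $\varphi$ to be constant. Here I would run the standard argument: for each open arc $U\subseteq\T$ the set $\varphi^{-1}(U)$ is $\Gamma$-invariant modulo null sets, and hence by ergodicity is null or co-null. Covering $\T$ by finitely many arcs of diameter $<1/n$ yields, for each $n$, a single arc of full measure; intersecting these nested full-measure sets over $n$ localizes the essential range of $\varphi$ to a single point of $\T$, so $\varphi$ is almost everywhere constant. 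This identifies $\ker\beta$ with the subgroup of constant functions.

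The only place any care is needed is this last step, and even there the difficulty is purely in handling the continuum of values of $\T$ by a countable covering argument; the homomorphism property, surjectivity, and continuity are all formal, following from the defining formula together with the topological-group structure of $L^0(\Omega,\nu;\T)$ and the invariance of $\nu$.
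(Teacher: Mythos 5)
Your proposal is correct and follows essentially the same route as the paper: the homomorphism property and surjectivity are read off from the defining formula, continuity reduces to the coordinate maps $\varphi\mapsto\varphi\cdot\overline{t.\varphi}$ and uses invariance of $\nu$ (the paper does this via a direct $L^1$ estimate and sequential continuity, you via the topological-group structure of $L^0(\Omega,\nu;\T)$ --- a cosmetic difference), and the kernel is identified by ergodicity. Your covering-by-arcs argument just spells out in detail the step the paper dispatches in one line (``functions constant on orbits are constant by ergodicity''), so there is nothing missing on either side.
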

\begin{proof}
For $\varphi,\psi\in L^0(\Omega,\nu;\T)$, $t\in \Gamma$ and $\omega\in\Omega$, we have
\[
b_{\varphi\cdot\psi}(t,\omega)=\frac{\varphi(\omega)\psi(\omega)}{\varphi(t^{-1}.\omega)\psi(t^{-1}.\omega)}=b_{\varphi}(t,\omega)b_{\psi}(t,\omega).
\]
Hence, $\beta$ is a group homomorphism. It is surjective by definition of $B^1(\sigma;\T)$. To see that $\beta$ is continuous, it suffices to show that it is sequentially continuous since the topology on $L^0(\Omega,\nu;\T)$ is metrizable by Lemma \ref{lem:top_on_L0(Omega,nu;D)}. Let $\varphi_n$ be a convergent sequence in $L^0(\Omega,\nu;\T)$ with limit $\varphi$. Then $\norm{\varphi-\varphi_n}{1}\rightarrow0$, by Lemma \ref{lem:top_on_L0(Omega,nu;D)}. For every $t\in \Gamma$, we have
\[
\norm{b_{\varphi}(t,\square)-b_{\varphi_n}(t,\square)}{1}\leq\norm{\varphi-\varphi_n}{1}+\norm{t.\varphi-t.\varphi_n}{1}= 2\norm{\varphi-\varphi_n}{1}\rightarrow0.
\]
In the last equality, we use that the measure is invariant for the action of $\Gamma$. Because $\Gamma$ is discrete, it follows that $b_{\varphi_n}\rightarrow b_{\varphi}$ in $B^1(\sigma;\T)$. Hence, $\beta$ is continuous. Finally, $\ker\beta$ consists of functions which are constant on the orbits of $\Gamma\act{\sigma}(\Omega,\nu)$. Since the action is ergodic, it follows that $\ker\beta$ is the subgroup of constant functions.
\end{proof}

\begin{thm}\label{thm:B1_closed_implies_strongly_ergodic}
Let $\Gamma$ be a countable discrete group, $(\Omega,\nu)$ a separable probability space and $\Gamma\act{\sigma}(\Omega,\nu)$ an ergodic p.m.p. action. If $B^1(\sigma;\T)$ is closed then $\sigma$ is strongly ergodic.
\end{thm}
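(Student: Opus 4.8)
The plan is to prove the contrapositive: assuming $\sigma$ is \emph{not} strongly ergodic, I will show that $B^1(\sigma;\T)$ fails to be closed. The whole argument hinges on the homomorphism $\beta$ from Lemma \ref{lem:beta:L0->B1}. Indeed, if $B^1(\sigma;\T)$ were closed, it would be a closed subgroup of the Polish group $Z^1(\sigma;\T)$ (Proposition \ref{prop:Z^1_is_Polish}), hence itself Polish by Proposition \ref{prop:Polish_permanence_subgroup}. Then $\beta:L^0(\Omega,\nu;\T)\to B^1(\sigma;\T)$ would be a continuous surjective homomorphism between Polish groups, so open by the Open Mapping Theorem (Theorem \ref{thm:open_mapping_Polish}). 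Since $\ker\beta$ is exactly the subgroup of constant functions $\T$, openness of $\beta$ upgrades it to an isomorphism of topological groups $\overline{\beta}:L^0(\Omega,\nu;\T)/\T\to B^1(\sigma;\T)$. My goal is therefore to exhibit a sequence that converges in $B^1(\sigma;\T)$ but whose classes do not converge in the quotient, contradicting continuity of $\overline{\beta}^{-1}$.

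For the construction, non-strong-ergodicity supplies a non-trivial asymptotically $\Gamma$-invariant sequence $\seq{A}{n}$; non-triviality gives a $\delta>0$ with $\nu(A_n)\geq\delta$ and $\nu(A_n^{\complement})\geq\delta$ for all large $n$. I set $\varphi_n=1_\Omega-2\cdot 1_{A_n}\in L^0(\Omega,\nu;\T)$, the function taking value $+1$ on $A_n^{\complement}$ and $-1$ on $A_n$. A direct computation of (\ref{eqn:def:coboundary}) shows that for each $t$, the function $b_{\varphi_n}(t,\cdot)$ equals $1$ off $A_n\triangle t.A_n$ and $-1$ on it, so $\nu(\{\omega: b_{\varphi_n}(t,\omega)\neq 1\})=\nu(t.A_n\triangle A_n)\to 0$ as $n\to\infty$. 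As $\Gamma$ is countable, coordinatewise convergence in the product topology gives $b_{\varphi_n}\to b_{1_\Omega}$ in $Z^1(\sigma;\T)$, and hence in the subspace $B^1(\sigma;\T)$.

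It remains to show that the classes $[\varphi_n]$ do not converge to $[1_\Omega]$ in $L^0(\Omega,\nu;\T)/\T$; equivalently, that no sequence of constants $c_n\in\T$ satisfies $c_n\varphi_n\to 1_\Omega$. Working with the $L^1$-metric, which is legitimate by Lemma \ref{lem:top_on_L0(Omega,nu;D)}, I compute
\[
\norm{c_n\varphi_n-1_\Omega}{1}=\nu(A_n^{\complement})\,\abs{c_n-1}+\nu(A_n)\,\abs{c_n+1}.
\]
The elementary identity $\abs{c-1}^2+\abs{c+1}^2=4$ for $c\in\T$ forces $\max(\abs{c_n-1},\abs{c_n+1})\geq\sqrt2$, and since both $\nu(A_n^{\complement})$ and $\nu(A_n)$ are at least $\delta$, this yields $\norm{c_n\varphi_n-1_\Omega}{1}\geq\sqrt2\,\delta$ uniformly in $n$ and in the choice of $c_n$. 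Thus the quotient distance $\inf_{c\in\T}\norm{c\varphi_n-1_\Omega}{1}$ stays bounded away from $0$, so $[\varphi_n]\not\to[1_\Omega]$, contradicting that $\overline{\beta}$ is a topological isomorphism. Hence $B^1(\sigma;\T)$ is not closed.

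I expect the main technical point to be the passage to the quotient: justifying that openness of $\beta$ promotes $\overline{\beta}$ to a genuine homeomorphism, and that convergence in $L^0(\Omega,\nu;\T)/\T$ is faithfully detected by the existence of correcting constants $c_n$. This relies on compactness of $\T$ (the constants), which makes the quotient metrizable via $\inf_{c\in\T}\norm{c\varphi-1_\Omega}{1}$ and ensures the infimum describes the quotient topology. By contrast, the cocycle computation for $b_{\varphi_n}$ and the $\abs{c\pm1}$ estimate are routine.
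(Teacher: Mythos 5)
Your proof is correct and follows essentially the same route as the paper: apply the Open Mapping Theorem to $\beta$, feed in the test functions $\pm(2\cdot 1_{A_n}-1_{\Omega})$ coming from an asymptotically invariant sequence, and observe that the only possible preimages of $b_{\varphi_n}$ are the rotations $c\varphi_n$, none of which can be close to $1_{\Omega}$ when $\nu(A_n)$ and $\nu(A_n^{\complement})$ are both bounded below. The paper works directly with openness of $\beta$ at the identity rather than passing to the quotient $L^0(\Omega,\nu;\T)/\T$, but this is only a difference in packaging (and your aside attributing the metrizability of the quotient to compactness of $\T$ is unnecessary --- invariance of the $L^1$-metric under multiplication by $\T$-valued functions already suffices).
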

\begin{proof}
Because $\Gamma$ is countable discrete and $\Gamma\act{\sigma}(\Omega,\nu)$ is a p.m.p. action on a separable probability space, $Z^1(\sigma;\T)$ is Polish by Proposition \ref{prop:Z^1_is_Polish}. Assume $B^1(\sigma;\T)$ is closed. Then it is a Polish group, by Proposition \ref{prop:Polish_permanence_subgroup}. Because the map $\beta:L^0(\Omega,\nu;\T)\rightarrow B^1(\sigma;\T)$ from Lemma \ref{lem:beta:L0->B1} is then a continuous, surjective homomorphism of Polish groups, we deduce from Theorem \ref{thm:open_mapping_Polish} that $\beta$ is open. Hence, for each $\varepsilon>0$, the set
\[
\beta\left(V_{\varepsilon}(1_{\Omega})\right)=\set{b_{\varphi}}{\varphi\in L^0(\Omega,\nu;\T)\mbox{ such that }\nu\left(\left\{\abs{1_{\Omega}-\varphi}\geq\varepsilon\right\}\right)<\varepsilon}
\]
is an open set in $B^1(\sigma;\T)$ containing $b_{1_{\Omega}}$. Employing Lemma \ref{lem:top_on_L0(Omega,nu;D)}, we can then, for each $\varepsilon>0$, find a finite subset $F_{\varepsilon}\subset\Gamma$ and a $\delta_{\varepsilon}>0$ such that the set
\[
\set{b\in B^1(\sigma;\T)}{\norm{b(t,\square)-1_{\Omega}}{1}<\delta_{\varepsilon}\mbox{ for every }t\in F_{\varepsilon}}
\]
is contained in $\beta\left(V_{\varepsilon}(1_{\Omega})\right)$. That is, if $b\in B^1(\sigma;\T)$ is any $1$-coboundary satisfying that $\norm{b(t,\square)-1_{\Omega}}{1}<\delta_{\varepsilon}$, for every $t\in F_{\varepsilon}$, then there is a $\psi\in L^0(\Omega,\nu;\T)$ such that $b=b_{\psi}$ and such that $\nu(\{\abs{1_{\Omega}-\psi}\geq\varepsilon\})<\varepsilon$.

Let $\seq{A}{n}$ be an asymptotically invariant sequence of measurable subsets of $\Omega$ and set, for each $n\in\N$, $\varphi_n=2\cdot 1_{A_n}-1_{\Omega}$ and $b_n=b_{\varphi_n}\in B^1(\sigma;\T)$. Then, for each $t\in\Gamma$,
\[
\norm{b_n(t,\square)-1_{\Omega}}{1}=\norm{\varphi_n-t.\varphi_n}{1}=2\norm{1_{A_n}-t.1_{A_n}}{1}=2\nu(A_n\triangle t.A_n)\rightarrow0.
\]
Let $\seq{N}{k}$ be a strictly increasing sequence in $\N$ such that $\norm{b_{N_k}(t,\square)-1_{\Omega}}{1}<\delta_{1/k}$, for every $k\in\N$. For each $k\in\N$, we can then find $\psi_k\in L^0(\Omega,\nu;\T)$ such that $b_{N_k}=b_{\psi_k}$ and such that $\nu(\{\abs{1_{\Omega}-\psi_k}\geq\tfrac{1}{k}\})<\tfrac{1}{k}$. Because $\ker\beta\cong\T$, by Lemma \ref{lem:beta:L0->B1}, we must have $\psi_k=e^{i\theta_k}\varphi_{N_k}$, for some $\theta_k\in[0,2\pi)$. Observe that
\[
(1_{\Omega}-e^{i\theta_k}\varphi_{N_k})(\omega)=\begin{cases}
        1-e^{i\theta_k} & \text{if } \omega \in A_{N_k}\\
        1+e^{i\theta_k} & \text{if } \omega \not\in A_{N_k}
    \end{cases}
\]
Hence, we can always choose $\theta_k$ such that $e^{i\theta_k}=\pm1$. Now, for $k\in\N$ where $e^{i\theta_k}=1$, we have $1_{\Omega}-\psi_k=2(1_{\Omega}-1_{A_{N_k}})$ and so
\[
\nu\left(\left\{\abs{1_{\Omega}-\psi_k}\geq\frac{1}{k}\right\}\right)=\nu\left(\left\{1_{\Omega}-1_{A_{N_k}}\geq\frac{1}{2k}\right\}\right)=\nu\left(A_{N_k}^{\complement}\right)\\
\]
Similarly, for $k\in\N$ where $e^{i\theta_k}=-1$, we have $1_{\Omega}-\psi_k=2\cdot 1_{A_{N_k}}$ and so
\[
\nu\left(\left\{\abs{1_{\Omega}-\psi_k}\geq\frac{1}{k}\right\}\right)=\nu\left(\left\{1_{A_{N_k}}\geq\frac{1}{2k}\right\}\right)=\nu\left(A_{N_k}\right)\\
\]
Hence, for each $k\in\N$, it is either the case that $\nu\left(A_{N_k}\right)<\tfrac{1}{k}$ or that $\nu\left(A_{N_k}^{\complement}\right)<\tfrac{1}{k}$. Then, for each $k\in\N$,
\[
\nu\left(A_{N_k}\right)\nu\left(A_{N_k}^{\complement}\right)\leq\min\left\{\nu\left(A_{N_k}\right),\nu\left(A_{N_k}^{\complement}\right)\right\}<\frac{1}{k}.
\]
It follows that
\[
\liminf_{n\in\N}\nu\left(A_n\right)\nu\left(A_n^{\complement}\right)=0.
\]
Hence, $\seq{A}{n}$ is trivially asymptotically $\Gamma$-invariant. Since $\seq{A}{n}$ was an arbitrary asymptotically $\Gamma$-invariant sequence, it follows that $\Gamma\act{\sigma}(\Omega,\nu)$ is strongly ergodic.
\end{proof}

The characterization of property $(\rm T)$ by Connes and Weiss (Theorem \ref{thm:ConnesWeiss}) together with Theorem \ref{thm:B1_closed_implies_strongly_ergodic} immediately implies the following corollary:
\begin{cor}\label{cor:(T)_and_closure_of_B1}
Let $\Gamma$ be a discrete group. If $B^1(\sigma;\T)$ is closed in $Z^1(\sigma;\T)$, for every ergodic p.m.p. action $\Gamma\act{\sigma}(\Omega,\nu)$, then $\Gamma$ has property $(\rm T)$.
\end{cor}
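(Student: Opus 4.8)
The plan is to deduce the corollary as a direct combination of the two cited results, with the only real work being to verify that the hypotheses of Theorem \ref{thm:B1_closed_implies_strongly_ergodic} are met for every action that the Connes--Weiss criterion forces us to consider. By Theorem \ref{thm:ConnesWeiss}, the group $\Gamma$ has property $(\rm T)$ as soon as every ergodic p.m.p. action of $\Gamma$ is strongly ergodic, so it suffices to establish strong ergodicity of an arbitrary such action.

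First I would invoke Remark \ref{rk:ConnesWeiss} to restrict attention to actions on diffuse standard probability spaces. This reduction is precisely what reconciles the two theorems: a diffuse standard probability space is by definition isomorphic mod $0$ to $([0,1],\mathrm{Leb})$, and the latter is separable, so any such $(\Omega,\nu)$ is separable in the sense required by Theorem \ref{thm:B1_closed_implies_strongly_ergodic}. Fixing an ergodic p.m.p. action $\Gamma\act{\sigma}(\Omega,\nu)$ on a diffuse standard probability space, the standing hypotheses of Theorem \ref{thm:B1_closed_implies_strongly_ergodic} (countable $\Gamma$, separable $(\Omega,\nu)$, ergodic p.m.p.\ $\sigma$) are then satisfied, and the hypothesis of the corollary supplies the remaining ingredient, namely that $B^1(\sigma;\T)$ is closed. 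Theorem \ref{thm:B1_closed_implies_strongly_ergodic} thus yields that $\sigma$ is strongly ergodic.

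Since $\sigma$ was arbitrary among ergodic p.m.p. actions on diffuse standard probability spaces, every such action is strongly ergodic; together with Remark \ref{rk:ConnesWeiss} this upgrades to strong ergodicity of every ergodic p.m.p. action, and Theorem \ref{thm:ConnesWeiss} then concludes that $\Gamma$ has property $(\rm T)$.

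The argument is essentially bookkeeping, so I do not expect a substantive obstacle; the one point requiring care is the alignment of hypotheses. The Connes--Weiss characterization as invoked quantifies over all ergodic p.m.p. actions, whereas Theorem \ref{thm:B1_closed_implies_strongly_ergodic} is proved only for countable $\Gamma$ and separable $(\Omega,\nu)$. The separability gap is closed by the reduction to diffuse standard probability spaces afforded by Remark \ref{rk:ConnesWeiss}, while the countability of $\Gamma$ is the standing assumption of the paper (it is what guarantees, via Proposition \ref{prop:Z^1_is_Polish}, that $Z^1(\sigma;\T)$ is Polish and hence that the open mapping argument underlying Theorem \ref{thm:B1_closed_implies_strongly_ergodic} is available). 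Once this matching is checked, the implication is immediate.
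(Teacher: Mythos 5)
Your proposal is correct and matches the paper's intended argument exactly: the paper derives the corollary as an immediate consequence of Theorem \ref{thm:ConnesWeiss} and Theorem \ref{thm:B1_closed_implies_strongly_ergodic}, and your reduction via Remark \ref{rk:ConnesWeiss} to diffuse standard (hence separable) probability spaces is precisely the bookkeeping the paper leaves implicit. Your observation about the countability hypothesis is a fair point of care, and your resolution is the right one.
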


\begin{rk}\label{rk:(T)_and_closure_of_B1}
By Remark \ref{rk:ConnesWeiss}, it suffices in Corollary \ref{cor:(T)_and_closure_of_B1} to consider actions on diffuse standard probability spaces.
\end{rk}

\section{Proof of Theorem \ref{introthm:weakTLp_and_TLp}}\label{sec:weakTLp}
Let $\Gamma\act{\sigma}(\Omega,\nu)$ be an ergodic measure class preserving action of a discrete group on a separable $\sigma$-finite measure space. Let $1\leq p<\infty$ and $c\in Z^1(\sigma;\T)$ be given. For each $t\in\Gamma$ and $\xi\in L^p(\Omega,\nu)$, set
\[
\pi_{p,\sigma,c}(t)\xi=c_t\left(\frac{\mathrm{d}t.\nu}{\mathrm{d}\nu}\right)^{1/p}t.\xi.
\]
Then $\pi_{p,\sigma,c}$ is an isometric representation of $\Gamma$ on $L^p(\Omega,\nu)$. We refer to, e.g., \cite{Gardella2019AModernLook} for a modern review of group representations on $L^p$-spaces. In this section, we characterice when $\pi_{p,\sigma,c}$ has invariant vectors (see Proposition \ref{prop:pi(p,sigma,c)_invariant_vectors}). Moreover, we give a sufficient condition for the existence of almost invariant unit vectors in the setting where the action is measure preserving (see Proposition \ref{prop:pi(p,sigma,c)_almost_invariant_vectors}). Together with the connection between property $(\rm T)$ and the closure of the space of $1$-coboundaries shown in Corollary \ref{cor:(T)_and_closure_of_B1}, these insights allow us to show the equivalence of weak property $(\mathrm{T}_{L^p})$ and property $(\mathrm{T}_{L^p})$. This is Theorem \ref{introthm:weakTLp_and_TLp} in the introduction and Theorem \ref{thm:weakTLp_and_TLp} below.

\begin{prop}\label{prop:pi(p,sigma,c)_invariant_vectors}
Let $\Gamma$ be a discrete group and $\Gamma\act{\sigma}(\Omega,\nu)$ an ergodic measure class preserving action on a $\sigma$-finite measure space. For $c\in Z^1(\sigma;\T)$ and $1\leq p<\infty$, denote by $\pi_{p,\sigma,c}$ the associated representation of $\Gamma$ on $L^p(\Omega,\nu)$. Then $\pi_{p,\sigma,c}$ admits a non-zero invariant vector if and only if $c$ is a $1$-coboundary and $\nu$ is equivalent to a finite $\Gamma$-invariant measure.
\end{prop}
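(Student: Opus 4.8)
The plan is to route both implications through a single identity describing how Radon--Nikodym derivatives interact with the action. Concretely, I would first record that if $\mu$ is a finite measure equivalent to $\nu$ with strictly positive density $h=\tfrac{\mathrm{d}\mu}{\mathrm{d}\nu}$, then a change of variables yields $\tfrac{\mathrm{d}(t.\mu)}{\mathrm{d}(t.\nu)}=t.h$, and hence by the chain rule $\tfrac{\mathrm{d}(t.\mu)}{\mathrm{d}\nu}=(t.h)\,\tfrac{\mathrm{d}t.\nu}{\mathrm{d}\nu}$. Consequently $\mu$ is $\Gamma$-invariant if and only if $\tfrac{\mathrm{d}t.\nu}{\mathrm{d}\nu}=h/(t.h)$ for every $t\in\Gamma$. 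This is the engine of the whole argument.

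For the \emph{if} direction, suppose $c=b_{\varphi}$ for some $\varphi\in L^0(\Omega,\nu;\T)$ and let $\mu\sim\nu$ be finite and $\Gamma$-invariant with density $h$. I would propose the explicit candidate $\xi=\varphi\,h^{1/p}$. Since $\abs{\varphi}=1$ a.e., $\norm{\xi}{p}^p=\int h\,\mathrm{d}\nu=\mu(\Omega)<\infty$, so $\xi\in L^p(\Omega,\nu)$, and $\xi\neq0$ as $h>0$ a.e. Writing out $\pi_{p,\sigma,c}(t)\xi$, the coboundary factor $c_t=\varphi/(t.\varphi)$ cancels the $t.\varphi$ coming from $t.\xi$, and the identity $\tfrac{\mathrm{d}t.\nu}{\mathrm{d}\nu}=h/(t.h)$ makes the density factor cancel against $(t.h)^{1/p}$; a direct computation then gives $\pi_{p,\sigma,c}(t)\xi=\xi$ for every $t$.

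For the \emph{only if} direction, let $\xi\neq0$ be invariant and set $f=\abs{\xi}\in L^p(\Omega,\nu)$. Taking absolute values in $\pi_{p,\sigma,c}(t)\xi=\xi$ and using $\abs{c_t}=1$ gives $(\tfrac{\mathrm{d}t.\nu}{\mathrm{d}\nu})^{1/p}\,t.f=f$ for every $t$. Since the density is strictly positive, the set $\{f>0\}$ is $\Gamma$-invariant, so by ergodicity it is co-null; thus $f>0$ a.e. Defining $\mu$ by $\mathrm{d}\mu=f^p\,\mathrm{d}\nu$ produces a finite measure equivalent to $\nu$ with density $h=f^p$, and raising the previous identity to the power $p$ gives $\tfrac{\mathrm{d}t.\nu}{\mathrm{d}\nu}=h/(t.h)$, so $\mu$ is $\Gamma$-invariant by the engine identity. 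Finally, setting $\varphi=\xi/f\in L^0(\Omega,\nu;\T)$ (well-defined since $f>0$ a.e.) and substituting $\xi=\varphi f$ back into the invariance equation, the factor $f$ and the density cancel by the relation just established, leaving $c_t(\omega)=\varphi(\omega)/\varphi(t^{-1}.\omega)$, that is, $c=b_{\varphi}$.

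The step I expect to require the most care is the measure-theoretic bookkeeping in the engine identity --- in particular getting the direction of the pushforward and the placement of $t$ versus $t^{-1}$ right in $\tfrac{\mathrm{d}(t.\mu)}{\mathrm{d}(t.\nu)}=t.h$ --- since everything else follows by cancellation once that relation is in hand. A secondary point worth confirming is that $f>0$ a.e. genuinely needs ergodicity rather than merely $\xi\neq0$, as this is exactly what upgrades a non-zero invariant vector to a measure equivalent to $\nu$ on all of $\Omega$ rather than only on a $\Gamma$-invariant subset.
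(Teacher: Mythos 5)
Your proposal is correct and follows essentially the same route as the paper: the same candidate invariant vector $\varphi\,(\mathrm{d}\mu/\mathrm{d}\nu)^{1/p}$ in one direction, and in the other the same steps of taking absolute values, using ergodicity to get $\abs{\xi}>0$ a.e., factoring $\xi=\varphi\abs{\xi}$ to extract the coboundary, and showing $\abs{\xi}^p\,\mathrm{d}\nu$ is a finite invariant measure equivalent to $\nu$. The only difference is cosmetic: you verify invariance of the measure via the pointwise Radon--Nikodym identity $\tfrac{\mathrm{d}t.\nu}{\mathrm{d}\nu}=h/(t.h)$, whereas the paper does the equivalent bookkeeping by integrating against indicator functions and the change-of-variables formula.
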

\begin{proof}
Suppose $\xi\in L^p(\Omega,\nu)$ is a non-zero invariant vector for $\pi_{p,\sigma,c}$. Then, for every $t\in\Gamma$,
\[
\xi=c_t\left(\frac{\mathrm{d}t.\nu}{\mathrm{d}\nu}\right)^{1/p}t.\xi\qquad\mbox{$\nu$-a.e.}\numberthis\label{eqn:inv_vector}
\]
For each $t\in\Gamma$, let $\Omega_0^t\subset\Omega$ be the subset where we have equality in (\ref{eqn:inv_vector}) and set $\Omega_0=\cap_{t\in\Gamma}\Omega_0^t$. Then $\Omega_0$ is measurable and co-null. Because $c_t$ takes values in $\T$, we see that
\[
\abs{\xi}=\left(\frac{\mathrm{d}t.\nu}{\mathrm{d}\nu}\right)^{1/p}\abs{t.\xi},\numberthis\label{eqn:inv_vector_abs}
\]
for every $t\in\Gamma$ and with equality on $\Omega_0$. Then, since the Radon-Nikodym derivative is strictly positive, it follows that the set $\{\xi=0\}\cap\Omega_0$ is $\Gamma$-invariant. Because $\Gamma\act{\sigma}(\Omega,\nu)$ is ergodic and $\xi$ is non-zero, this implies that $\nu(\{\xi=0\})=0$. We then get a $\nu$-almost everywhere uniquely defined measurable function $\varphi\in L^0(\Omega,\nu;\T)$ such that $\xi=\varphi\abs{\xi}$. Insert this into equation (\ref{eqn:inv_vector}) and apply equation (\ref{eqn:inv_vector_abs}) to obtain:
\[
\varphi\abs{\xi}=c_t\left(\frac{\mathrm{d}t.\nu}{\mathrm{d}\nu}\right)^{1/p}t.\varphi\abs{t.\xi}=c_t t.\varphi\abs{\xi},
\]
for every $t\in\Gamma$ and with equality on $\Omega_0$. Since, $\nu(\{\xi=0\})=0$, we deduce that
\[
c_t=\frac{\varphi}{t.\varphi}
\]
$\nu$-almost everywhere and for every $t\in\Gamma$. Hence, $c$ is a $1$-coboundary. Further, for every $t\in\Gamma$ and every measurable subset $B\subset\Omega$, we have
\begin{align*}
\integral{1_{t.B}\abs{\xi}^p}{\Omega}{\nu}&=\integral{t.1_{B}\frac{\mathrm{d}t.\nu}{\mathrm{d}\nu}\abs{t.\xi}^p}{\Omega}{\nu}\\
&=\integral{1_{B}\abs{\xi}^p}{\Omega}{\nu},
\end{align*}
where we have used equation (\ref{eqn:inv_vector_abs}) in the first equality and the change of variable formula in the second. This shows that the finite measure $\abs{\xi}^p\mathrm{d}\nu$ is $\Gamma$-invariant. Because $\nu(\{\xi=0\})=0$, we see that the measure given by $\abs{\xi}^p\mathrm{d}\nu$ is equivalent to $\nu$.

Conversely, suppose that $\nu$ is equivalent to a finite $\Gamma$-invariant measure $\mu$, and that $c$ is a $1$-coboundary. Write $c=b_{\varphi}$, for a $\varphi\in L^0(\Omega,\nu;\T)$. Since $\mu$ is finite, the Radon-Nikodym derivative $\tfrac{\mathrm{d}\mu}{\mathrm{d}\nu}$ is $\nu$-integrable. Set $\xi=\varphi\cdot(\tfrac{\mathrm{d}\mu}{\mathrm{d}\nu})^{1/p}$. Then $\xi$ is non-zero and lies in $L^p(\Omega,\nu)$. We have, for every measurable subset $B\subset\Omega$, the equality
\[
\integral{1_{t.B}\abs{\xi}^p}{\Omega}{\nu}=\mu(t.B)=\mu(B)=\integral{1_{B}\abs{\xi}^p}{\Omega}{\nu}.
\]
Further, for each $t\in\Gamma$,
\[
\integral{1_{B}\abs{\xi}^p}{\Omega}{\nu}=\integral{1_{t.B}\frac{\mathrm{d}t.\nu}{\mathrm{d}\nu}\abs{t.\xi}^p}{\Omega}{\nu},
\]
by the change of variable formula. Putting this together, we see that
\[
\integral{1_{t.B}\left(\abs{\xi}^p-\frac{\mathrm{d}t.\nu}{\mathrm{d}\nu}\abs{t.\xi}^p\right)}{\Omega}{\nu}=0,
\]
for every measurable subset $B\subset\Omega$ and every $t\in\Gamma$. In particular, this equality holds for the following measurable subsets of $\Omega$:
\[
t^{-1}.\left\{\abs{\xi}^p>\frac{\mathrm{d}t.\nu}{\mathrm{d}\nu}\abs{t.\xi}^p\right\}\qquad\mbox{and}\qquad
t^{-1}.\left\{\abs{\xi}^p<\frac{\mathrm{d}t.\nu}{\mathrm{d}\nu}\abs{t.\xi}^p\right\}.
\]
Hence,
\[
\abs{\xi}^p=\frac{\mathrm{d}t.\nu}{\mathrm{d}\nu}\abs{t.\xi}^p\qquad \mbox{$\nu$-a.e.}
\]
We deduce that 
\[
\pi_{p,\sigma,b_{\varphi}}(t)\xi=\frac{\varphi}{t.\varphi}\left(\frac{\mathrm{d}t.\nu}{\mathrm{d}\nu}\right)^{1/p}t.\xi=\varphi\left(\frac{\mathrm{d}t.\nu}{\mathrm{d}\nu}\right)^{1/p}\abs{t.\xi}=\varphi\abs{\xi}=\xi\qquad \mbox{$\nu$-a.e.}
\]
That is, $\xi$ is a non-zero invariant vector for $\pi_{p,\sigma,b_{\varphi}}$.
\end{proof}

\begin{prop}\label{prop:pi(p,sigma,c)_almost_invariant_vectors}
Let $\Gamma$ be a discrete group, let $(\Omega,\nu)$ be a probability space, let $\Gamma\act{\sigma}(\Omega,\nu)$ be a p.m.p. action and let $1\leq p<\infty$. If $c\in Z^1(\sigma;\T)$ is the limit of a net of $1$-coboundaries then $\pi_{p,\sigma,c}$ admits a net of almost invariant unit vectors.
\end{prop}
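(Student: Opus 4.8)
The plan is to use the approximating $1$-coboundaries themselves as the almost invariant unit vectors. Since the action is measure preserving, the Radon-Nikodym derivative $\tfrac{\mathrm{d}t.\nu}{\mathrm{d}\nu}$ is everywhere equal to $1$, so the representation simplifies to $\pi_{p,\sigma,c}(t)\xi=c_t\,t.\xi$ for every $\xi\in L^p(\Omega,\nu)$ and $t\in\Gamma$. Let $\left(b_{\varphi_i}\right)_{i\in I}$ be a net of $1$-coboundaries converging to $c$ in $Z^1(\sigma;\T)$, where each $\varphi_i\in L^0(\Omega,\nu;\T)$. Since the topology on $Z^1(\sigma;\T)$ is the one inherited from the product $L^0(\Omega,\nu;\T)^{\Gamma}$, this convergence means precisely that $b_{\varphi_i}(t,\square)\to c_t$ in $L^0(\Omega,\nu;\T)$ for each fixed $t\in\Gamma$.

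First I would observe that the functions $\varphi_i$, viewed as elements of $L^p(\Omega,\nu)$, are unit vectors: as $\abs{\varphi_i}=1$ everywhere and $\nu$ is a probability measure, $\norm{\varphi_i}{p}^p=\nu(\Omega)=1$. These $\left(\varphi_i\right)_{i\in I}$, indexed by the same directed set $I$, will be the candidate almost invariant vectors. The crucial ingredient is the pointwise identity $\varphi_i=b_{\varphi_i}(t,\square)\,t.\varphi_i$, which holds for every $t\in\Gamma$ and follows immediately from the definition (\ref{eqn:def:coboundary}) of $b_{\varphi_i}$ together with $t.\varphi_i(\omega)=\varphi_i(t^{-1}.\omega)$.

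Using this identity I would compute, for each $t\in\Gamma$,
\[
\pi_{p,\sigma,c}(t)\varphi_i-\varphi_i=c_t\,t.\varphi_i-b_{\varphi_i}(t,\square)\,t.\varphi_i=\bigl(c_t-b_{\varphi_i}(t,\square)\bigr)\,t.\varphi_i.
\]
Because $\abs{t.\varphi_i}=1$ everywhere, taking absolute values pointwise gives $\absb{\pi_{p,\sigma,c}(t)\varphi_i-\varphi_i}=\absb{c_t-b_{\varphi_i}(t,\square)}$, and hence $\norm{\pi_{p,\sigma,c}(t)\varphi_i-\varphi_i}{p}=\norm{c_t-b_{\varphi_i}(t,\square)}{p}$. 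Finally, by Lemma \ref{lem:top_on_L0(Omega,nu;D)} convergence in $L^0(\Omega,\nu;\T)$ is equivalent to convergence in the $L^p$-norm, so the right-hand side tends to $0$ for every $t\in\Gamma$. Thus $\left(\varphi_i\right)_{i\in I}$ is a net of almost invariant unit vectors for $\pi_{p,\sigma,c}$.

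I do not expect a genuine obstacle in this argument; the only insight required is to recognize that the correct almost invariant vectors are the approximants $\varphi_i$ rather than, say, the constant function $1_{\Omega}$. Once this choice is made, the coboundary identity collapses the norm estimate exactly onto the hypothesized convergence $b_{\varphi_i}\to c$, and the equivalence of topologies from Lemma \ref{lem:top_on_L0(Omega,nu;D)} finishes the proof. The only mild care needed is to keep the almost invariant net indexed by the same directed set $I$ as the approximating net of coboundaries.
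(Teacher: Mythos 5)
Your proposal is correct and follows essentially the same route as the paper: take the approximants $\varphi_i$ as unit vectors, use the identity $\varphi_i=b_{\varphi_i}(t,\square)\,t.\varphi_i$ to reduce $\norm{\pi_{p,\sigma,c}(t)\varphi_i-\varphi_i}{p}$ to $\norm{c_t-b_{\varphi_i}(t,\square)}{p}$, and conclude via Lemma \ref{lem:top_on_L0(Omega,nu;D)}. Your write-up is in fact slightly more explicit than the paper's about factoring out the unimodular function $t.\varphi_i$.
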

\begin{proof}
Let $(\varphi_i)_i$ be a net in $L^0(\Omega,\nu;\T)$. For each index $i$, write $b_i=b_{\varphi_i}$ for the associated $1$-coboundary. Assume that $(\varphi_i)_i$ is such that $c$ is the limit of $(b_i)_i$. For each index $i$, $\varphi_i$ lies in $L^p(\Omega,\nu)$ with unit norm since $(\Omega,\nu)$ is a probability space and $\varphi_i$ has everywhere modulus equal to $1$. We have, for all $t\in\Gamma$,
\[
\norm{\pi_{p,\sigma,c}(t)\varphi_i-\varphi_i}{p}=\norm{c_t\,t.\varphi_i-\varphi_i}{p}=\norm{c_t-b_i(t,\square)}{p}.
\]
Because $p$ is finite and because $c$ is the limit of $(b_i)_i$, Lemma \ref{lem:top_on_L0(Omega,nu;D)} implies that this converges to zero. Hence, $(\varphi_i)_i$ is a net of almost invariant unit vectors for $\pi_{p,\sigma,c}$.
\end{proof}

\begin{thm}\label{thm:weakTLp_and_TLp} 
A discrete group has property $(\mathrm{T}_{L^p})$ if and only if it has weak property $(\mathrm{T}_{L^p})$
\end{thm}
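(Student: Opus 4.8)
The forward implication is immediate and I would dispatch it first. Suppose $\Gamma$ has property $(\mathrm{T}_{L^p})$ and let $(\pi,E)$ be an isometric representation with $E$ an $L^p$-space on a $\sigma$-finite measure space admitting a net of almost invariant unit vectors $\net{\xi}{i}{I}$. By Definition \ref{def:T_E} there is a net of invariant vectors $\net{\eta}{i}{I}$ with $\norm{\xi_i-\eta_i}{E}\to0$. Since $\norm{\eta_i}{E}\geq\norm{\xi_i}{E}-\norm{\xi_i-\eta_i}{E}=1-\norm{\xi_i-\eta_i}{E}$, the vector $\eta_i$ is non-zero for $i$ large enough, which supplies the non-zero invariant vector demanded by Definition \ref{def:weakT_E}. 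Hence $\Gamma$ has weak property $(\mathrm{T}_{L^p})$.

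For the converse the plan is to factor the implication as weak property $(\mathrm{T}_{L^p})\Rightarrow(\rm T)\Rightarrow$ property $(\mathrm{T}_{L^p})$, where the second step is the known implication that property $(\rm T)$ implies property $(\mathrm{T}_{L^p})$ for $1\leq p<\infty$ (Bader--Furman--Gelander--Monod, \cite{BaderFurmanGelanderMonod}). All of the new content therefore lies in the first step, and here I would invoke Corollary \ref{cor:(T)_and_closure_of_B1}: it suffices to show that, assuming weak property $(\mathrm{T}_{L^p})$, the group $B^1(\sigma;\T)$ is closed in $Z^1(\sigma;\T)$ for every ergodic p.m.p. action $\Gamma\act{\sigma}(\Omega,\nu)$ (a diffuse standard probability space suffices, by Remark \ref{rk:(T)_and_closure_of_B1}).

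I would establish this by contraposition. Suppose $B^1(\sigma;\T)$ fails to be closed for some ergodic p.m.p. action on a probability space $(\Omega,\nu)$, and choose a cocycle $c\in\overline{B^1(\sigma;\T)}\setminus B^1(\sigma;\T)\subseteq Z^1(\sigma;\T)$. Consider the isometric representation $\pi_{p,\sigma,c}$ on $L^p(\Omega,\nu)$, which lies in the class of $L^p$-spaces. Since $c$ is a limit of $1$-coboundaries, Proposition \ref{prop:pi(p,sigma,c)_almost_invariant_vectors} produces a net of almost invariant unit vectors for $\pi_{p,\sigma,c}$. On the other hand, because the action is measure preserving on a probability space, $\nu$ is itself a finite $\Gamma$-invariant measure equivalent to $\nu$, so the criterion of Proposition \ref{prop:pi(p,sigma,c)_invariant_vectors} collapses to the single condition that $c$ be a $1$-coboundary; as $c\notin B^1(\sigma;\T)$, the representation $\pi_{p,\sigma,c}$ has no non-zero invariant vector. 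This contradicts weak property $(\mathrm{T}_{L^p})$. Consequently every such $B^1(\sigma;\T)$ is closed, and Corollary \ref{cor:(T)_and_closure_of_B1} yields property $(\rm T)$, completing the first step.

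The main obstacle is precisely this first step, and it is where the machinery assembled earlier is spent: converting the failure of weak property $(\mathrm{T}_{L^p})$ into a statement about the coboundary group relies on the exact characterisation of invariant vectors in Proposition \ref{prop:pi(p,sigma,c)_invariant_vectors} (the measure-preserving hypothesis on a probability space being what makes the auxiliary condition on $\nu$ automatic, so that absence of invariant vectors is equivalent to $c\notin B^1(\sigma;\T)$), together with the Open Mapping Theorem packaged inside Corollary \ref{cor:(T)_and_closure_of_B1}, which is what lets non-closedness of $B^1(\sigma;\T)$ detect the failure of strong ergodicity and hence of $(\rm T)$. The remaining input $(\rm T)\Rightarrow(\mathrm{T}_{L^p})$ is external and I would simply cite it; I would also note that countability of $\Gamma$ is used throughout, being built into the Polish-group hypotheses underlying Corollary \ref{cor:(T)_and_closure_of_B1}.
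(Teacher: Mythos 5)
Your proposal is correct and takes essentially the same route as the paper: the paper also runs the argument through Corollary \ref{cor:(T)_and_closure_of_B1} (phrased contrapositively as ``no $(\mathrm{T}_{L^p})$ $\Rightarrow$ no $(\mathrm{T})$ $\Rightarrow$ some $B^1(\sigma;\T)$ not closed''), picks a cocycle $c$ in the closure of $B^1(\sigma;\T)$ but not in it, and applies Propositions \ref{prop:pi(p,sigma,c)_invariant_vectors} and \ref{prop:pi(p,sigma,c)_almost_invariant_vectors} to $\pi_{p,\sigma,c}$ exactly as you do, with \cite[Theorem A]{BaderFurmanGelanderMonod} supplying the $(\mathrm{T})\Rightarrow(\mathrm{T}_{L^p})$ leg. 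Your treatment of the easy forward implication and your remark about where the probability-space/measure-preserving hypotheses trivialize the invariance criterion are both accurate.
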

\begin{proof}
Let $\Gamma$ be a discrete group without property $(\mathrm{T}_{L^p})$. Then $\Gamma$ does not have property $(\rm T)$, by \cite[Theorem A]{BaderFurmanGelanderMonod}. Hence, by Corollary \ref{cor:(T)_and_closure_of_B1} and Remark \ref{rk:(T)_and_closure_of_B1}, there is an ergodic p.m.p. action $\Gamma\act{\sigma}(\Omega,\nu)$ on a separable probability space such that $B^1(\sigma;\T)$ is not closed. We can then find a $1$-cocycle $c\in Z^1(\sigma,\T)$ which is not a $1$-coboundary but which is the limit of a net of $1$-coboundaries. It follows from Proposition \ref{prop:pi(p,sigma,c)_invariant_vectors} and Proposition \ref{prop:pi(p,sigma,c)_almost_invariant_vectors} that $\pi_{p,\sigma,c}$ has almost invariant unit vectors but no non-zero invariant vector.
\end{proof}

\section{On the possibility of an easier proof}\label{sec:no_easier_proof}
The class $L^p$, for $1\leq p<\infty$ and $p\neq2$, is neither stable under quotients nor under complemented subspaces. Therefore, it does not meet either of the conditions of Proposition 2.20 in \cite{ElkiaerPooya2023} for the equivalence of weak property $(\mathrm{T}_{\mathcal{E}})$ and property $(\mathrm{T}_{\mathcal{E}})$. However, as mentioned in the introduction, these conditions are stronger than needed. In this section, we shall address the question if a proof of Theorem \ref{introthm:weakTLp_and_TLp} is possible via the ideas used in \cite{ElkiaerPooya2023}. Precisely, given an isometric $L^p$-representation $(\pi,L^p(\Omega,\nu))$, if one could guarantee the existence of another isometric $L^p$-representation $(\rho,L^p(\Omega',\mu))$ and a bounded isomorphism $\Phi:L^p(\Omega,\nu)/L^p(\Omega,\nu)^{\pi}\rightarrow L^p(\Omega',\mu)$ such that $\rho=\Phi\circ\pi\circ\Phi^{-1}$, the equivalence of weak property $(\mathrm{T}_{L^p})$ and property $(\mathrm{T}_{L^p})$ would follow directly. In Theorem \ref{thm:Lp0_is_not_an_Lp-space}, which is Theorem \ref{introthm:Lp0_is_not_an_Lp-space} in the introduction, we give an example of an isometric representation on an $L^p$-space where the quotient with the subspace of invariant vectors is not isometrically isomorphic to any $L^p$-space on a $\sigma$-finite measure space.\\

Let $\Gamma\act{\sigma}(\Omega,\nu)$ be an ergodic p.m.p. action and denote by $\pi_{p,\sigma}$ the associated representation on $L^p(\Omega,\nu)$ (with the trivial $1$-cocycle). Observe that $L^p(\Omega,\nu)^{\pi_{p,\sigma}}\cong \C 1_{\Omega}$, and so, the quotient $L^p(\Omega,\nu)/L^p(\Omega,\nu)^{\pi_{p,\sigma}}$ is isomorphic as a vector space to the $\Gamma$-invariant subspace of functions with mean zero:
\[
L^p_0(\Omega,\nu)=\set{f\in L^p(\Omega,\nu)}{\integral{f}{\Omega}{\nu}=0}.
\]
Denote by $\pi_{p,\sigma}^0$ the restriction of $\pi_{p,\sigma}$ to $L^p_0(\Omega,\nu)$ and by $\overline{\pi}_{p,\sigma}$ the representation on the quotient $L^p(\Omega,\nu)/L^p(\Omega,\nu)^{\pi_{p,\sigma}}$ coming from $\pi_{p,\sigma}$. A straight forward computation confirms that the isomorphism between $L^p(\Omega,\nu)/L^p(\Omega,\nu)^{\pi_{p,\sigma}}$ and $L^p_0(\Omega,\nu)$ is equivariant, i.e., that it intertwines $\pi_{p,\sigma}^0$ and $\overline{\pi}_{p,\sigma}$. However, it need not be isometric. Proposition \ref{prop:quotient_and_Lp0} below is well-known to experts.
\begin{prop}\label{prop:quotient_and_Lp0}
Let $\Gamma\act{\sigma}(\Omega,\nu)$ be an ergodic p.m.p. action. Let $1<p,p'<\infty$ be Hölder conjugates. The quotient $L^p(\Omega,\nu)/L^p(\Omega,\nu)^{\pi_{p,\sigma}}$ is equivariantly and isometrically isomorphic to the dual of $L^{p'}_0(\Omega,\nu)$.
\end{prop}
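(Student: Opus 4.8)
The plan is to realize the quotient as a dual space through the standard annihilator duality for Banach spaces, and then to verify that every identification is $\Gamma$-equivariant. Throughout I use that, since $\sigma$ is measure preserving and the $1$-cocycle is trivial, one has $\pi_{p,\sigma}(t)\xi=t.\xi$ on $L^p(\Omega,\nu)$ and $\pi_{p',\sigma}(t)f=t.f$ on $L^{p'}(\Omega,\nu)$; and that, because $1<p,p'<\infty$ are Hölder conjugates, the pairing $\inner{\xi}{f}=\integral{\xi f}{\Omega}{\nu}$ identifies $L^p(\Omega,\nu)$ isometrically with the dual of $L^{p'}(\Omega,\nu)$.

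First I would record the compatibility of this pairing with the two actions. Since $\nu$ is $\Gamma$-invariant and $t.(\xi f)=(t.\xi)(t.f)$, we get $\inner{t.\xi}{t.f}=\inner{\xi}{f}$ for all $t\in\Gamma$, hence $\inner{t.\xi}{f}=\inner{\xi}{t^{-1}.f}$. This says precisely that the contragredient representation of $\pi_{p',\sigma}$ on $L^{p'}(\Omega,\nu)^*=L^p(\Omega,\nu)$ coincides with $\pi_{p,\sigma}$.

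Next I would compute the annihilator of the $\Gamma$-invariant closed subspace $L^{p'}_0(\Omega,\nu)$ inside $L^p(\Omega,\nu)$. A function $g\in L^p(\Omega,\nu)$ lies in the annihilator iff the functional $f\mapsto\integral{fg}{\Omega}{\nu}$ vanishes on the kernel of $f\mapsto\integral{f}{\Omega}{\nu}$; since $\nu$ is a probability measure (so $1_\Omega\in L^p(\Omega,\nu)$ represents the latter functional), this forces $g$ to be a scalar multiple of $1_\Omega$. Thus $L^{p'}_0(\Omega,\nu)^{\perp}=\C 1_\Omega$, which is exactly $L^p(\Omega,\nu)^{\pi_{p,\sigma}}$ by ergodicity.

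Finally I would invoke the textbook isometric identification $E^*/F^{\perp}\cong F^*$ (via restriction of functionals) with $E=L^{p'}(\Omega,\nu)$ and $F=L^{p'}_0(\Omega,\nu)$, producing an isometric isomorphism $L^p(\Omega,\nu)/\C 1_\Omega\to L^{p'}_0(\Omega,\nu)^*$. For equivariance, note that the restriction map $E^*\to F^*$ intertwines the contragredient of $\pi_{p',\sigma}$ with the contragredient of its restriction to $F$, and its kernel is the $\Gamma$-invariant subspace $F^{\perp}=\C 1_\Omega$; passing to the quotient therefore intertwines $\overline{\pi}_{p,\sigma}$ with the dual representation on $L^{p'}_0(\Omega,\nu)^*$. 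The functional-analytic core is the standard duality $E^*/F^{\perp}\cong F^*$; the only points demanding care are the bookkeeping of the contragredient conventions and the verification that the annihilator is exactly the constants, and I expect the latter identification (together with tracking which representation sits on the dual) to be the main thing to get right.
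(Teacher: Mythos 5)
Your proof is correct, and while it rests on the same two pillars as the paper's argument --- the pairing $\inner{\xi}{f}=\integral{\xi f}{\Omega}{\nu}$, which intertwines $\pi_{p,\sigma}$ with the dual representation of $\pi_{p',\sigma}$ because $\nu$ is invariant, and the identification $L^{p'}_0(\Omega,\nu)^{\perp}=\C 1_{\Omega}=L^p(\Omega,\nu)^{\pi_{p,\sigma}}$, the last equality by ergodicity --- it establishes the isometry by a different mechanism. The paper first identifies the quotient with $L^p_0(\Omega,\nu)$ as a vector space, sends $f_0$ to the functional $\varphi_{f_0}(g)=\integral{f_0g}{\Omega}{\nu}$ on $L^{p'}_0(\Omega,\nu)$, and then compares $\norm{\varphi_{f_0}}{}$ with the quotient norm $\inf_{d\in\C}\norm{f_0+d}{p}$ by a direct computation with norming functionals. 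You instead invoke the general annihilator duality $E^*/F^{\perp}\cong F^*$ via restriction of functionals, with Hahn--Banach supplying norm-preserving extensions; this packages both norm inequalities at once and requires nothing specific to $L^p$. Your route is arguably the more robust one: the delicate point in any hands-on version is that $\norm{\varphi_{f_0}}{}$ equals the \emph{quotient} norm $\inf_{d\in\C}\norm{f_0+d}{p}$, which for $p\neq 2$ can be strictly smaller than $\norm{f_0}{p}$ (the mean-zero part of a norming element $g\in L^{p'}(\Omega,\nu)$ for $f_0$ need not lie in the unit ball of $L^{p'}_0(\Omega,\nu)$), so the supremum defining $\norm{\varphi_{f_0}}{}$ must be taken over the unit ball of $L^{p'}_0(\Omega,\nu)$ itself; the abstract formulation sidesteps this subtlety entirely. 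Your equivariance bookkeeping --- the restriction map $E^*\rightarrow F^*$ intertwines the dual representations and has kernel equal to the invariant subspace, so it descends equivariantly to the quotient --- is exactly what is needed and matches the content of the paper's equivariance check.
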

\begin{proof}
For $f\in L^p_0(\Omega,\nu)$ and $g\in L^{p'}_0(\Omega,\nu)$, set $\varphi_{f}(g)=\integral{fg}{\Omega}{\nu}$. Then $\varphi_{f}$ is a linear functional on $L^{p'}_0(\Omega,\nu)$ and it is straightforward to check that $f\mapsto\varphi_{f}$ defines a vector space isomorphism $\varphi:L^p_0(\Omega,\nu)\rightarrow L^{p'}_0(\Omega,\nu)'$. Denote by $(\pi_{p',\sigma}^0)'$ the dual representation of the restriction of $\pi_{p',\sigma}$ to $L^{p'}_0(\Omega,\nu)$. For each $f\in L^p_0(\Omega,\nu)$ and $g\in L^{p'}_0(\Omega,\nu)$ and for each $t\in \Gamma$,
\begin{align*}
(\pi_{p',\sigma}^0)'(t)\varphi_f(g)&=\varphi_f(\pi_{p,\sigma}^0(t^{-1})g)=\integral{g(t.\omega)f(\omega)}{\Omega}{\nu(\omega)}=\varphi_{\pi_{p,\sigma}(t)f}(g),
\end{align*}
where we have used invariance of $\nu$ in the last equality. This shows that $\varphi$ is equivariant. Precomposing $\varphi$ with the canonical equivariant isomorphism from $L^p(\Omega,\nu)/L^p(\Omega,\nu)^{\pi_{p,\sigma}}$ to $L^p_0(\Omega,\nu)$, we obtain an equivariant isomorphism from $L^p(\Omega,\nu)/L^p(\Omega,\nu)^{\pi_{p,\sigma}}$ to $L^{p'}_0(\Omega,\nu)'$ given by $[f]\mapsto\varphi_{f_0}$, where $f_0\in L^p_0(\Omega,\nu)$ is such that $f=f_0+c$, for some $c\in\C$. We claim that this map is isometric. Observe that if $f_0\in L^p_0(\Omega,\nu)$, $g_0\in L^p_0(\Omega,\nu)$ and $c\in\C$ then
\[
\integral{f_0(g_0+c)}{\Omega}{\nu}=\integral{f_0g_0}{\Omega}{\nu}=\integral{(f_0+c)g_0}{\Omega}{\nu}.\numberthis\label{eqn:quotient_and_Lp0_1}
\]
We apply the first of these equalities to see that
\[
\norm{\varphi_{f_0}}{}=\sup\set{\abs{\integral{f_0g_0}{\Omega}{\nu}}}{g\in L^{p'}_0(\Omega,\nu),\norm{g}{p'}\leq1}=\norm{f_0}{p},
\]
from which it follows that
\[
\norm{[f_0+c]}{}=\inf_{d\in\C}\norm{f_0+d}{p}\leq\norm{f_0}{p}=\norm{\varphi_{f_0}}{}.
\]
Conversely, utilizing the second equality in equation \ref{eqn:quotient_and_Lp0_1}, we see that, for each $d\in\C$,
\begin{align*}
\norm{f_0+d}{p}&=\sup\set{\abs{\integral{(f_0+d)g}{\Omega}{\nu}}}{g\in L^{p'}(\Omega,\nu),\norm{g}{p'}\leq1}\\
&\geq \sup\set{\abs{\integral{f_0g_0}{\Omega}{\nu}}}{g_0\in L^{p'}_0(\Omega,\nu),\norm{g}{p'}\leq1}.
\end{align*}
Hence,
\[
\norm{[f_0+c]}{}=\inf_{d\in\C}\norm{f_0+d}{p}\geq\norm{\varphi_{f_0}}{}.
\]
This proves the claim.
\end{proof}

We shall see in Theorem \ref{thm:Lp0_is_not_an_Lp-space} that, in many cases, $L^p_0(\Omega,\nu)$ \emph{is not} isometrically isomorphic to an $L^p$-space on a $\sigma$-finite measure space, and so, neither is its dual. The proof relies on the following extension theorem, which is Theorem 4 in \cite{HandbookBanach_21KoldobskyKonig}. We state it below without proof.

\begin{thm}[Extension Theorem]\label{thm:extension}
Let $1\leq p<\infty$, $p\not\in2\N$, and let $(\Omega_1,\nu_1)$ and $(\Omega_2,\nu_2)$ be probability spaces. Let $Y\subset L^p(\Omega_1,\nu_1)$ be a subspace containing $1_{\Omega_1}$ and denote by $\Sigma(Y)$ the smallest $\sigma$-algebra on $\Omega_1$ making all functions in $Y$ measurable. Let $\Phi:Y\rightarrow L^p(\Omega_2,\nu_2)$ be a linear isometry. There exists a linear isometry $\Phi':L^p(\Omega_1,\Sigma(Y),\nu_1)\rightarrow L^p(\Omega_2,\nu_2)$ such that $\Phi'\restrict{H}=\Phi$.
\end{thm}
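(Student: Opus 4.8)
The plan is to realize $\Phi'$ as a weighted composition operator in the spirit of Lamperti's description of $L^p$-isometries, with weight $h=\Phi(1_{\Omega_1})$ and a change of variables governed by an equimeasurability principle that holds precisely because $p\notin2\N$. Since $\|h\|_p=\|1_{\Omega_1}\|_p=1$, the assignment $\mathrm{d}\tilde\nu_2=\abs{h}^p\,\mathrm{d}\nu_2$ defines a probability measure on $\Omega_2$. For any finite family $f_1,\dots,f_n\in Y$, linearity of $\Phi$ and the isometry hypothesis give, for every $w\in\C^n$,
\begin{align*}
\integral{\abs{1_{\Omega_1}+\sum_j w_j f_j}^p}{\Omega_1}{\nu_1}
&=\integral{\abs{h+\sum_j w_j\,\Phi f_j}^p}{\Omega_2}{\nu_2}\\
&=\integral{\abs{1+\sum_j w_j\tfrac{\Phi f_j}{h}}^p}{\Omega_2}{\tilde\nu_2},
\end{align*}
where in the last step $\abs{h}^p$ is factored out and the integrand is read as $0$ on $\{h=0\}$, a set $\tilde\nu_2$ ignores. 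First I would use this identity to reduce the whole theorem to one rigidity statement: the law of $(f_1,\dots,f_n)$ under $\nu_1$ and the law of $(\Phi f_1/h,\dots,\Phi f_n/h)$ under $\tilde\nu_2$ coincide as Borel measures on $\C^n$.

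The crux, and the only genuinely hard step, is the equimeasurability theorem underpinning this reduction: for $p\notin2\N$, two $\C^n$-valued random vectors $X,X'$ with $\E\abs{1+\sum_j w_j X_j}^p=\E\abs{1+\sum_j w_j X_j'}^p$ for all $w\in\C^n$ must be equidistributed. I would prove it by Fourier inversion. Fixing $w$ and using homogeneity, the hypothesis determines $\E\abs{a+b\sum_j w_j X_j}^p$ for all scalars $a,b$; since the vector $(1,\sum_j w_j X_j)$ has constant first coordinate, a shift-and-invert argument recovers the law of the single variable $\sum_j w_j X_j$. This step rests on the fact that the tempered-distributional Fourier transform of $t\mapsto\abs{t}^p$ is a \emph{nonzero} multiple of the homogeneous distribution $\abs{s}^{-1-p}$, with no obstructing derivative-of-delta term, exactly when $p$ is not an even integer; for $p\in2\N$ the moment functional is a polynomial in $w$ that records only finitely many moments and the conclusion genuinely fails, which is why the hypothesis on $p$ cannot be dropped. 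Knowing the law of $\sum_j w_j X_j$ for every $w$, the Cramér--Wold device then pins down the joint law of $X$; passing to real and imaginary parts handles the complex scalars throughout.

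Granting equimeasurability, the extension is assembled by transporting cylinder functions. The functions $F(f_1,\dots,f_n)$ with $f_j\in Y$ and $F\colon\C^n\to\C$ bounded Borel are dense in $L^p(\Omega_1,\Sigma(Y),\nu_1)$, since $\Sigma(Y)$ is by definition generated by $Y$. I would set
\[
\Phi'\bigl(F(f_1,\dots,f_n)\bigr)=h\cdot F\Bigl(\tfrac{\Phi f_1}{h},\dots,\tfrac{\Phi f_n}{h}\Bigr),
\]
interpreted as $0$ on $\{h=0\}$. Equimeasurability makes this well defined (if two cylinder representatives agree $\nu_1$-a.e., the Borel set where their defining functions differ is null for the law of the $f_j$, hence null for the law of the $\Phi f_j/h$ under $\tilde\nu_2$, hence $\nu_2$-null on $\{h\neq0\}$) and isometric, because
\begin{align*}
\norm{\Phi'(F(\vec f))}{p}^p
&=\integral{\abs{h}^p\,\abs{F(\Phi\vec f/h)}^p}{\Omega_2}{\nu_2}
=\integral{\abs{F(\Phi\vec f/h)}^p}{\Omega_2}{\tilde\nu_2}\\
&=\integral{\abs{F(\vec f)}^p}{\Omega_1}{\nu_1}=\norm{F(\vec f)}{p}^p.
\end{align*}
A linear combination of two cylinder functions is again a cylinder function over the union of the two families, and the defining formula respects such combinations, so $\Phi'$ is linear on this dense set; extending by continuity gives a linear isometry $\Phi'\colon L^p(\Omega_1,\Sigma(Y),\nu_1)\to L^p(\Omega_2,\nu_2)$. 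Taking $n=1$ and $F(x)=x$ yields $\Phi' f=h\cdot(\Phi f/h)=\Phi f$ for $f\in Y$, so $\Phi'\restrict{Y}=\Phi$, as required.

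The main obstacle is squarely the equimeasurability theorem of the second paragraph and its Fourier-analytic core; once the injectivity of the affine $p$-th-moment transform is in hand, the weight bookkeeping, the density of cylinder functions, and the isometric continuation are routine. I would expect to spend real care only on the complex-scalar reduction and on verifying that the null-set manipulations defining $\Phi'$ are consistent across overlapping cylinder representations — both standard.
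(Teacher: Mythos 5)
The paper itself offers no proof of Theorem \ref{thm:extension}: it is quoted from \cite{HandbookBanach_21KoldobskyKonig} (Theorem 4 there, due to Plotkin and Hardin), so your attempt can only be compared with that literature proof. In outline you do reconstruct it: the Plotkin--Rudin equimeasurability theorem plus transport of cylinder functions is exactly the standard route, and your Fourier-analytic sketch of the one-variable equimeasurability statement (shift-and-invert turning the affine $p$-th moment transform into convolution with $\abs{t}^p$, injectivity because its Fourier transform is a nonzero multiple of a homogeneous distribution precisely when $p\notin2\N$, then Cram\'er--Wold) is a legitimate way to prove that core.

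There is, however, one genuine gap, and it is not bookkeeping. Your reduction identity
\[
\integral{\absbb{h+\textstyle\sum_j w_j\,\Phi f_j}^p}{\Omega_2}{\nu_2}
=\integral{\absbb{1+\textstyle\sum_j w_j\,\Phi f_j/h}^p}{\Omega_2}{\tilde\nu_2}
\]
is false in general: on $\{h=0\}$ the left-hand integrand equals $\absb{\sum_j w_j\Phi f_j}^p$, not $0$, so the correct identity carries the extra term $\integral{\absb{\sum_j w_j\Phi f_j}^p}{\{h=0\}}{\nu_2}$. Everything downstream silently assumes this term vanishes, i.e.\ that every $\Phi f$, $f\in Y$, is supported in $\{h\neq0\}$: without this, the hypothesis of your equimeasurability theorem is not satisfied (an affine moment identity plus a stray $p$-homogeneous term is not an affine moment identity); your isometry computation for $\Phi'$ accounts only for the part of $\Phi(F(\vec f))$ living on $\{h\neq0\}$; and your final check that $\Phi'\restrict{Y}=\Phi$ actually yields $\Phi'f=1_{\{h\neq0\}}\,\Phi f$. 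This support property is a theorem, not a convention: it fails completely for $p=2$ (send $1_{\Omega_1}$ and a mean-zero unit vector to two disjointly supported unit vectors), so any proof of it must invoke $p\notin2\N$ with essentially the same strength as the rest of the argument; this is why Hardin's proof establishes a support lemma before anything else. The gap is repairable inside your own framework --- for instance, writing the discrepancy as $a\abs{w}^p$ with $a=\integral{\abs{\Phi f}^p}{\{h=0\}}{\nu_2}$, the inversion $\zeta=-1/z$ you already use converts it into a point mass $a\delta_0$, and injectivity of convolution with $\abs{t}^p$ then forces $a=0$ --- but as written the proposal skips a step that carries real mathematical content.
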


Before proceeding to the proof of Theorem \ref{thm:Lp0_is_not_an_Lp-space}, we shall need two lemmas. Lemma \ref{lem:exist_equivalent_finite_measure} is well-known and can be verified with standard methods. We suspect that Lemma \ref{lem:rotating_subspace_of_balanced_fcts} is known, but we do not have a reference.

\begin{lem}\label{lem:exist_equivalent_finite_measure}
Let $(\Omega,\mu)$ be a $\sigma$-finite measure space and let $1\leq p<\infty$. There exists an equivalent probability measure $\nu$ on $\Omega$ such that $L^p(\Omega,\mu)$ is isometrically isomorphic to $L^p(\Omega,\nu)$.
\end{lem}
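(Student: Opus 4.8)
The plan is to exhibit a strictly positive density $h$ with respect to $\mu$, define $\nu=h\,\mathrm{d}\mu$, and then transport functions by multiplication with a suitable power of $h$. First I would use $\sigma$-finiteness to construct $h$: write $\Omega$ as a countable disjoint union $\Omega=\bigcupdot_{n}\Omega_n$ with $0<\mu(\Omega_n)<\infty$ (discarding any null piece and splitting the finite-measure pieces to make them disjoint), and set $h=\sum_n c_n 1_{\Omega_n}$, where the constants $c_n>0$ are chosen small enough that $\sum_n c_n\mu(\Omega_n)=1$. This $h$ is measurable, strictly positive on all of $\Omega$, and satisfies $\integral{h}{\Omega}{\mu}=1$, so the set function $\nu(A)=\integral{1_A h}{\Omega}{\mu}$ is a probability measure. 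Because $h>0$ everywhere, $\nu$ and $\mu$ have exactly the same null sets, hence are equivalent.

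The second step is to define the candidate isometry $U\colon L^p(\Omega,\mu)\rightarrow L^p(\Omega,\nu)$ by $Uf=h^{-1/p}f$, with proposed inverse $g\mapsto h^{1/p}g$. The key computation is
\[
\norm{Uf}{L^p(\Omega,\nu)}^p=\integral{\abs{h^{-1/p}f}^p}{\Omega}{\nu}=\integral{h^{-1}\abs{f}^p\,h}{\Omega}{\mu}=\integral{\abs{f}^p}{\Omega}{\mu}=\norm{f}{L^p(\Omega,\mu)}^p,
\]
where I have used $\mathrm{d}\nu=h\,\mathrm{d}\mu$. This shows that $U$ is a well-defined linear isometry; linearity is immediate, and running the same change-of-density identity in reverse shows that $g\mapsto h^{1/p}g$ maps $L^p(\Omega,\nu)$ into $L^p(\Omega,\mu)$ and is a two-sided inverse of $U$, so $U$ is onto. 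Hence $L^p(\Omega,\mu)$ and $L^p(\Omega,\nu)$ are isometrically isomorphic.

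I do not expect a serious obstacle, as this is a classical normalization trick. The only points requiring a little care are ensuring that $h$ is strictly positive (so that $\mu\ll\nu$ and genuine equivalence, not merely absolute continuity, holds) and that the powers $h^{\pm1/p}$ are measurable and finite almost everywhere; both are guaranteed by the explicit piecewise-constant choice of $h$. If one prefers to avoid constructing the disjoint pieces by hand, an alternative is to invoke the standard fact that every $\sigma$-finite measure admits a strictly positive $\mu$-integrable density, but the explicit construction above keeps the argument self-contained.
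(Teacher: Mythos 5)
Your proof is correct, and it is precisely the standard argument the paper has in mind: the paper omits the proof entirely, remarking only that the lemma ``is well-known and can be verified with standard methods,'' and your change-of-density construction ($\nu = h\,\mathrm{d}\mu$ for a strictly positive normalized density $h$, with the isometry $f \mapsto h^{-1/p}f$ and inverse $g \mapsto h^{1/p}g$) is that standard method. One cosmetic remark: rather than discarding $\mu$-null pieces $\Omega_n$ (which would make $h$ vanish on a null set), it is slightly cleaner to absorb them into a piece of positive measure so that $h>0$ everywhere, though equivalence of the measures holds either way since the exceptional set is $\mu$-null.
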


\begin{lem}\label{lem:rotating_subspace_of_balanced_fcts}
Let $(\Omega,\mathcal{B},\nu)$ be a diffuse standard probability space and let $1\leq p<\infty$. There exists an isometric isomorphism $U$ of $L^p(\Omega,\mathcal{B},\nu)$ such that
\begin{enumerate}[1.]
\item $1_{\Omega}\in U(L^p_0(\Omega,\mathcal{B},\nu))$,
\item The smallest $\sigma$-algebra making all functions in $U(L^p_0(\Omega,\mathcal{B},\nu))$ is $\mathcal{B}$.
\end{enumerate}
\end{lem}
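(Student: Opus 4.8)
The plan is to realize $U$ in the simplest possible form, namely as multiplication by a unimodular function. First I would record the routine fact that, for any $h\in L^0(\Omega,\mathcal B,\nu;\T)$ (so $\abs h=1$ a.e.), the map $U\colon f\mapsto hf$ is a surjective linear isometry of $L^p(\Omega,\mathcal B,\nu)$ with inverse $g\mapsto\bar h g$; this holds for every $1\le p<\infty$ because $\abs{hf}=\abs f$ pointwise. It therefore suffices to exhibit a single $h$ for which the two listed conditions hold for this particular $U$.

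Next I would pin down condition~1. Since $1_\Omega\in U(L^p_0)$ is equivalent to $\bar h=U^{-1}(1_\Omega)\in L^p_0$, and $\int_\Omega\bar h\,\mathrm d\nu=\overline{\int_\Omega h\,\mathrm d\nu}$, condition~1 amounts exactly to $\int_\Omega h\,\mathrm d\nu=0$. This is where diffuseness enters: as $(\Omega,\nu)$ is non-atomic there is a set $E\in\mathcal B$ with $\nu(E)=\tfrac12$, and I would take $h=1_E-1_{E^\complement}$, which is real, involutive ($h^2=1_\Omega$), unimodular, and has vanishing integral. With this choice $U(h)=h^2=1_\Omega$ and $h\in L^p_0$, so condition~1 holds.

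For condition~2 the decisive observation --- and the reason the rotation does no harm --- is that $U$ preserves absolute values: $\abs{Uf}=\abs f$. Consequently $\{\abs g:g\in U(L^p_0)\}=\{\abs f:f\in L^p_0\}$, and since each $\abs g$ is $\sigma(g)$-measurable it is enough to prove that $\{\abs f:f\in L^p_0\}$ already generates $\mathcal B$. I would verify this with the test functions $f_A=1_A-\nu(A)1_\Omega\in L^p_0$: whenever $\nu(A)\neq\tfrac12$, the function $\abs{f_A}$ takes the two distinct values $1-\nu(A)$ on $A$ and $\nu(A)$ on $A^\complement$, so that $A\in\sigma(\abs{f_A})$. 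Since the sets of measure different from $\tfrac12$ generate $\mathcal B$ (any $B$ with $\nu(B)=\tfrac12$ splits, by diffuseness, into two pieces of measure $\tfrac14$), we obtain $\sigma\bigl(U(L^p_0)\bigr)=\mathcal B$.

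I expect the only real obstacle to be condition~2. A priori, conjugating the generating subspace $L^p_0$ by an isometry could shrink the $\sigma$-algebra it generates, and condition~1 forces us to displace $L^p_0$ nontrivially. The resolution is to work only with isometries that preserve absolute values pointwise, which reduces generation for the displaced subspace to the elementary generation property of $L^p_0$ itself; both conditions are then met simultaneously by the single explicit choice of $h$ above, uniformly in $p$.
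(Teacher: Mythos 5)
Your proof is correct, and its core construction coincides with the paper's: both realize $U$ as multiplication by a unimodular function of integral zero, and both get condition 1 from the identity $h\bar h=1_\Omega$ together with the fact that $\bar h$ again has mean zero. Where you genuinely diverge is in the verification of condition 2. The paper first reduces to $([0,1],\mathrm{Leb})$ and recovers each open interval $(a,b)$ as the preimage of $\T$ under $M_{f_0}f_{a,b}$ with $f_{a,b}=1_{(a,(a+b)/2)}-1_{((a+b)/2,b)}$, so its argument leans on the interval structure of the model space. You instead observe that $\abs{Uf}=\abs{f}$, so the $\sigma$-algebra generated by $U(L^p_0)$ contains the one generated by $\{\abs{f}:f\in L^p_0\}$, and then generate every set $A$ with $\nu(A)\neq\tfrac12$ from $\abs{1_A-\nu(A)1_\Omega}$, handling the measure-$\tfrac12$ sets by splitting. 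This is intrinsic (no reduction to $[0,1]$ needed) and isolates cleanly why conjugating by a modulus-preserving isometry cannot shrink the generated $\sigma$-algebra; the paper's route is marginally more concrete but requires the (harmless) standard-space normalization. Both arguments are uniform in $p$, and your explicit choice $h=1_E-1_{E^{\complement}}$ with $\nu(E)=\tfrac12$ is a valid instance of the paper's unspecified $f_0$.
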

\begin{proof}
It suffices to consider the case where $(\Omega,\mathcal{B},\nu)$ is the interval $[0,1]$ with the Lebesgue measure. Let $f_0\in L^p_0([0,1])$ be such that $\abs{f_0(\omega)}=1$, for all $\omega\in[0,1]$. Then the multiplication operator $M_{f_0}$ is an isometric isomorphism of $L^p([0,1])$. Because $L^p_0([0,1])$ is stable under complex conjugation and $f_0\overline{f_0}=1_{\Omega}$, we see that $1_{\Omega}\in M_{f_0}(L^p_0([0,1]))$. Further, for each pair $0\leq a<b\leq1$, set $f_{a,b}=1_{\left(a,(a+b)/2\right)}-1_{\left((a+b)/2,b\right)}$. Then $f_{a,b}$ has mean zero and $M_{f_0}f_{a,b}$ takes values in $\T$ on the interval $(a,b)$ and is zero elsewhere. Therefore, the open interval $(a,b)=(M_{f_0}f_{a,b})^{-1}(\T)$ is in the $\sigma$-algebra generated by $M_{f_0}(L^p_0([0,1]))$. Hence, $M_{f_0}(L^p_0([0,1]))$ generates the Borel $\sigma$-algebra on $[0,1]$.
\end{proof}

\begin{thm}\label{thm:Lp0_is_not_an_Lp-space}
Let $(\Omega,\nu)$ be a diffuse standard probability space and let $1\leq p<\infty$, $p\not\in2\N$. Then $L^p_0(\Omega,\nu)$ is not isometrically isomorphic to an $L^p$-space on a $\sigma$-finite measure space.
\end{thm}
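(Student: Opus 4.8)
The plan is to argue by contradiction, combining the two preparatory lemmas with the Extension Theorem (Theorem \ref{thm:extension}). Suppose $L^p_0(\Omega,\nu)$ were isometrically isomorphic to an $L^p$-space on some $\sigma$-finite measure space. By Lemma \ref{lem:exist_equivalent_finite_measure}, that $L^p$-space may be taken to be $L^p(\Omega_2,\nu_2)$ for a probability space $(\Omega_2,\nu_2)$, so we would obtain a surjective linear isometry $T:L^p_0(\Omega,\nu)\rightarrow L^p(\Omega_2,\nu_2)$. Writing $(\Omega,\mathcal{B},\nu)$ for the given diffuse standard probability space, I would next invoke Lemma \ref{lem:rotating_subspace_of_balanced_fcts} to produce an isometric automorphism $U$ of $L^p(\Omega,\mathcal{B},\nu)$ for which $Y:=U(L^p_0(\Omega,\mathcal{B},\nu))$ contains the constant function $1_{\Omega}$ and satisfies $\Sigma(Y)=\mathcal{B}$.

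The role of rotating $L^p_0(\Omega,\nu)$ by $U$ is precisely to install the two hypotheses of the Extension Theorem: the subspace $Y$ now contains a constant function, and it generates the full $\sigma$-algebra $\mathcal{B}$, so that $L^p(\Omega,\Sigma(Y),\nu)$ is all of $L^p(\Omega,\nu)$. Since $U^{-1}$ restricts to a surjective isometry $U^{-1}\restrict{Y}:Y\rightarrow L^p_0(\Omega,\nu)$, the composite $\Phi:=T\circ U^{-1}\restrict{Y}:Y\rightarrow L^p(\Omega_2,\nu_2)$ is a surjective linear isometry defined on a subspace $Y\ni 1_{\Omega}$. Applying Theorem \ref{thm:extension} (this is the only place the hypothesis $p\not\in2\N$ is used), I would extend $\Phi$ to a linear isometry $\Phi':L^p(\Omega,\Sigma(Y),\nu)=L^p(\Omega,\nu)\rightarrow L^p(\Omega_2,\nu_2)$ with $\Phi'\restrict{Y}=\Phi$.

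The contradiction then comes from a codimension count. As the kernel of the bounded, nonzero functional $f\mapsto\int_{\Omega}f\,\mathrm{d}\nu$, the subspace $L^p_0(\Omega,\nu)$ has codimension one in $L^p(\Omega,\nu)$; applying the automorphism $U$ preserves this, so $Y\subsetneq L^p(\Omega,\nu)$ is a proper subspace. But $\Phi'$ is injective, being an isometry, while its restriction $\Phi'\restrict{Y}=\Phi$ already maps \emph{onto} all of $L^p(\Omega_2,\nu_2)$. Choosing any $x\in L^p(\Omega,\nu)\setminus Y$, the vector $\Phi'(x)$ would equal $\Phi(y)$ for some $y\in Y$, whence $\Phi'(x)=\Phi'(y)$ and injectivity forces $x=y\in Y$, a contradiction.

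The main obstacle is conceptual rather than computational, and it is absorbed into Lemma \ref{lem:rotating_subspace_of_balanced_fcts}: the Extension Theorem applies only to subspaces containing the constants and only outputs an isometry on the $L^p$-space of the generated sub-$\sigma$-algebra, so the whole difficulty lies in arranging that an isometric copy of $L^p_0(\Omega,\nu)$ simultaneously contains a constant function and generates the full $\sigma$-algebra. Once that is in place, the argument is a short contradiction. The restriction $p\not\in2\N$ enters solely through Theorem \ref{thm:extension}, which is consistent with the failure of the statement at $p=2$, where $L^2_0(\Omega,\nu)$ is a Hilbert space and hence trivially isometric to an $L^2$-space.
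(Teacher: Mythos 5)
Your proposal is correct and follows essentially the same route as the paper: reduce to a probability space via Lemma \ref{lem:exist_equivalent_finite_measure}, rotate $L^p_0$ by the automorphism of Lemma \ref{lem:rotating_subspace_of_balanced_fcts} so that the Extension Theorem applies, and derive a contradiction from the fact that an injective extension of an already-surjective isometry from a proper subspace is impossible. Your write-up merely spells out the final codimension/injectivity step in more detail than the paper does.
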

\begin{proof}
By Lemma \ref{lem:exist_equivalent_finite_measure}, it suffices to show that $L^p_0(\Omega,\nu)$ is not isometrically isomorphic to an $L^p$-space on a probability space. Suppose for contradiction that there is a probability space $(\Omega',\nu')$ and a linear isometric isomorphism $\Phi:L^p_0(\Omega,\nu)\rightarrow L^p(\Omega',\nu')$. Let $U$ be a linear isometric isomorphism of $L^p(\Omega,\nu)$ as in Lemma \ref{lem:rotating_subspace_of_balanced_fcts}. We apply the Extension Theorem (Theorem \ref{thm:extension}) to the composition $\Phi\circ U^{-1}:U(L^p_0(\Omega,\nu))\rightarrow L^p(\Omega',\nu')$ to obtain a linear isometry $L^p(\Omega,\nu)\rightarrow L^p(\Omega',\nu')$ extending $\Phi$. But $\Phi\circ U^{-1}$ is already surjective as a map defined on $U(L^p_0(\Omega,\nu))$. Therefore, it cannot extend to an injective map out of a strictly larger space. Hence, the map $\Phi$ cannot exist.
\end{proof}

The example considered in this section of an ergodic p.m.p. action on a diffuse standard probability space $(\Omega,\nu)$ is for us the most important example. Indeed, if $L^p_0(\Omega,\nu)$ is bounded equivariantly isomorphic to an isometric representation on a space in $L^p$, the equivalence of weak property $(\mathrm{T}_{L^p})$ and property $(\mathrm{T}_{L^p})$ would follow from this together with the characterization of property $(\rm T)$ by Connes and Weiss (see Theorem \ref{thm:ConnesWeiss} and Remark \ref{rk:ConnesWeiss} in the preliminary section). We end this section by remarking that we have only partially refuted this proof strategy by showing that $L^p_0(\Omega,\nu)$ is not \emph{isometrically} isomorphic to a space in $L^p$. It remains an open question if it is possible to find an equivariant bounded isomorphism. We expect that the answer to this question is `No'.

\begin{question}
Let $\Gamma\act{\sigma}(\Omega,\nu)$ be an ergodic p.m.p. action of a discrete group $\Gamma$ on a diffuse standard probability space and let $1\leq p<\infty$, $p\neq2$. Does there exist a $\sigma$-finite measure space $(\Omega',\mu)$ and a bounded (not necessarily isometric) isomorphism $\Phi:L^p_0(\Omega,\nu)\rightarrow L^p(\Omega',\mu)$ such that $\Phi\circ \pi_{p,\sigma}^0 \circ\Phi^{-1}$ is an isometric representation of $\Gamma$ on $L^p(\Omega',\mu)$?
\end{question}

It is known that $L^p_0(\Omega,\nu)$ \emph{is} bounded isomorphic to an $L^p$-space. This can be shown via the \emph{decomposition method} (see \cite[page 14]{HandbookBanach_1JohnsonLindenstrauss}). However, the bounded isomorphism achieved in this way is not equivariant.

\bibliography{bib}
\bibliographystyle{plain}

\vspace{2em}
\begin{minipage}[t]{0.45\linewidth}
  \small    Emilie Mai Elki\ae{}r\\
            Department of Mathematics\\
            University of Oslo\\
            0851 Oslo, Norway\\
		    {\footnotesize elkiaer@math.uio.no}
\end{minipage}

\newpage
\listoffixmes

\end{document}